\newcommand{\leqnomode}{\tagsleft@true}
\newcommand{\reqnomode}{\tagsleft@false}
\newtheorem{thm}{Theorem}
\newtheorem{lem}[thm]{Lemma}
\numberwithin{equation}{section}
\numberwithin{thm}{section}
\newtheorem{cor}[thm]{Corollary}
\newtheorem{ex}[thm]{Example}
\newcommand{\beq}{\begin{equation}}
\newcommand{\eeq}{\end{equation}}
\newcommand{\beqn}{\begin{equation*}}
\newcommand{\eeqn}{\end{equation*}}
\newcommand{\g}{\mathfrak{g}}
\newcommand{\wtg}{\widetilde{\mathfrak{g}}}
\newcommand{\whg}{\widehat{\mathfrak{g}}}
\newcommand{\h}{\mathfrak{h}}
\newcommand{\whh}{\widehat{\mathfrak{h}}}
\newcommand{\wth}{\widetilde{\mathfrak{h}}}
\newcommand{\n}{\mathfrak{n}}
\newcommand{\wtn}{\widetilde{\mathfrak{n}}}
\newcommand{\whq}{\widehat{Q}}
\newcommand{\whn}{\widehat{\nu}}
\newcommand{\Rc}{\mathcal{R}}
\newcommand{\Ec}{\mathcal{E}}
\newcommand{\en}{\mathop{\mathrm{en}}}
\newcommand{\chg}{\mathop{\mathrm{chg}}}
\newcommand{\ts}{\hspace{1pt}}
\newcommand{\gtgeq}{\widehat{\mathfrak{g}}[\widehat{\nu}]^{\geq 0}}
\newcommand{\gtle}{\widehat{\mathfrak{g}}[\widehat{\nu}]^{< 0}}
\newcommand{\ntplusleq}{\widetilde{\mathfrak{n}}_+^{< 0}}
\newcommand{\Pc}{\mathcal{P}}
\newcommand{\Zc}{\mathcal{Z}}
\def\smalloverbrace#1{\mathop{\vbox{\m@th\ialign{##\crcr\noalign{\kern3\p@}%
  \tiny\downbracefill\crcr\noalign{\kern3\p@\nointerlineskip}%
  $\hfil\displaystyle{#1}\hfil$\crcr}}}\limits}
\def\smallunderbrace#1{\mathop{\vtop{\m@th\ialign{##\crcr
   $\hfil\displaystyle{#1}\hfil$\crcr
   \noalign{\kern3\p@\nointerlineskip}%
   \tiny\upbracefill\crcr\noalign{\kern3\p@}}}}\limits}
\begin{document}

\title{Combinatorial bases of standard modules of twisted affine Lie algebras in types $A_{2l-1}^{(2)}$ and $D_{l+1}^{(2)}$: rectangular highest    weights}

\author{Marijana Butorac}
\address[M. Butorac]{Faculty of Mathematics, University of Rijeka, Radmile Matej\v{c}i\'{c} 2, 51000 Rijeka, Croatia}
\email{mbutorac@math.uniri.hr}

\author{Slaven Ko\v{z}i\'{c}} 
\address[S. Ko\v{z}i\'{c}]{Department of Mathematics, Faculty of Science, University of Zagreb, Bijeni\v{c}ka cesta 30, 10000 Zagreb, Croatia}
\email{kslaven@math.hr}

\keywords{Principal subspace, Combinatorial basis, Standard module, Parafermionic space}

\subjclass[2010]{17B67 (Primary) 17B69, 05A19 (Secondary)}

\begin{abstract}
We consider the standard modules of rectangular highest weights of   affine Lie algebras in types $A_{2l-1}^{(2)}$ and $D_{l+1}^{(2)}$.
By using   vertex algebraic techniques   we construct the  combinatorial bases for   standard modules and their principal subspaces and parafermionic spaces.  Finally,  we compute the corresponding character formulae and, as an application, we obtain two new families of  combinatorial identities.
\end{abstract}

\maketitle

\allowdisplaybreaks


\numberwithin{equation}{section}

\section{Introduction}
The connection between   Rogers--Ramanujan-type sums and   characters of parafermi\-o\-nic spaces was originally established in the work  of J. Lepowsky and M. Primc  \cite{LP}   by constructing the vertex operator bases of parafermi\-o\-nic spaces for the affine Lie algebra   $A_1^{(1)}$. Later on, such combinatorial bases for the parafermionic spaces of standard modules of rectangular highest weights  for all untwisted affine Lie algebras  were found by G. Georgiev \cite{G2} (type $A_l^{(1)}$) and  by  M. Primc and the authors \cite{BKP} (types $B_l^{(1)}$--$G_2^{(1)}$). These bases were used to prove the generalized  Rogers--Ramanujan-type sums obtained by A. Kuniba, T. Nakanishi and J. Suzuki  \cite{KNS} and D. Gepner \cite{Gep1}. 
Generalizing this     approach, M. Okado and R. Takenaka \cite{OT,T} found bases for the parafermionic spaces of standard modules of  highest weights $k\Lambda_0$   for   twisted affine Lie algebras. Moreover, using these bases they proved character formulae   conjectured  by G. Hatayama, A. Kuniba, M. Okado, T. Takagi and Z. Tsuboi 
 \cite{HKOTT}. 

In this paper, we construct the combinatorial bases for the parafermionic  spaces of standard modules of  {\em rectangular} highest weights, i.e  of weights of the form   
\beq\label{weight}
\Lambda=k_0\Lambda_0+k_j\Lambda_j,\qquad\text{where}\qquad k_0, k_j \in \mathbb{Z}_{\geq 0}\quad\text{and}\quad k=k_0+k_j > 0 ,
\eeq
 for   twisted affine Lie algebras   of types $A_{2l-1}^{(2)}$ and $D_{l+1}^{(2)}$, where   $j=1$ (resp. $j=l$) in type $A_{2l-1}^{(2)}$ (resp. $D_{l+1}^{(2)}$).
As with \cite{BKP,G2,OT}, the construction consists of several steps, as follows.

 First, we obtain the   quasi-particle bases  of the principal subspaces $W_{L^{\whn}(\Lambda)}$ of standard modules $L^{\whn}(\Lambda)$ for the highest weight $\Lambda$ as in \eqref{weight}. As with the corresponding bases in the case   $\Lambda=k\Lambda_0$,   found by the first author and C. Sadowski    \cite{BuS}, they are expressed in terms of monomials of twisted quasi-particles applied on the highest weight vector of $L^{\whn}(\Lambda)$, such that the quasi-particle energies satisfy certain difference and initial conditions. However, in this case,
the quasi-particle monomials satisfy different
  initial conditions, which we determine by employing   results of J. Lepowsky \cite{L1} and B. Bakalov and V. Kac \cite{BKac} on the lattice construction of level one twisted modules. 
Despite an apparent analogy between   the cases $\Lambda=k\Lambda_0$ and $\Lambda=k_0\Lambda_0+k_j\Lambda_j$, the latter requires a certain new ingredient in the proof of linear independence. More specifically, we   use   the so-called simple current operator from   Li's papers \cite{Li} and \cite{Li1}, which is a certain linear bijection between level one twisted modules from \cite{L1}. In addition, we also construct quasi-particle bases for the principal subspaces of   generalized Verma modules.

Next, we use the quasi-particle bases for  principal subspaces to construct the combinatorial bases for   standard modules of rectangular highest weights in types $A_{2l-1}^{(2)}$ and $D_{l+1}^{(2)}$, thus generalizing the aforementioned results of M. Okado and R. Takanaka \cite{OT}. As before, the proof of  linear independence relies on Li's simple current operators. Finally, we employ the parafermionic projection to   obtain the combinatorial bases of parafermionic spaces for all rectangular highest weights in types $A_{2l-1}^{(2)}$ and $D_{l+1}^{(2)}$.

At the end, we use all these bases to compute the character formulae of the corresponding standard modules and their principal subspaces and parafermionic spaces. In particular,   we obtain the parafermionic character formulae which coincide  with those of D. Gepner \cite{Gep2}. Furthermore, by comparing the quasi-particle bases for the principal subspaces of the generalized Verma modules with their Poincar\' e--Birkhoff--Witt bases, we discover  two new families of combinatorial identities which correspond to types $A_{2l-1}^{(2)}$ and $D_{l+1}^{(2)}$. The identities of such form often appear in different areas of mathematics; see, e.g., the papers by J. Fulman \cite{Fu} and J. Hua \cite{Hu}.

\section{Preliminaries}
\subsection{Level one standard modules}
Let $\g$ be a complex simple Lie algebra of type  $A_{2l-1}$ or $D_{l+1}$  with the Cartan subalgebra $\h$ and the root system $R$,  equipped with the standard nondegenerate symmetric invariant bilinear form $\langle \cdot, \cdot \rangle$. Using this form one can identify $\h$ and its dual $\h^{\ast}$ via $\langle  \alpha, h \rangle=\alpha(h)$ for $\alpha \in \h^{\ast}$ and $h \in \h$. We assume that the form is rescaled so that $\langle  \alpha, \alpha \rangle=2$  when $\alpha$ is a simple root.  The simple roots $ \alpha_1, \ldots , \alpha_{\text{rk} \g}$, where $\text{rk} \g$ stands for the rank of $\g$, generate the root lattice $Q$ and the fundamental weights $\lambda_1, \ldots, \lambda_{\text{rk} \g}$ generate the weight lattice $P$ of $\g$. Let
$\nu$ be the Dynkin diagram automorphism of $Q$ of order 2 as in Figure \ref{figure}.

\tikzset{node distance=2em, ch/.style={circle,draw,on chain,inner sep=2pt},chj/.style={ch,join},every path/.style={shorten >=4pt,shorten <=4pt},line width=1pt,baseline=-1ex}

\newcommand{\alabel}[1]{%
{\tiny \(\mathrlap{#1}\)}
}

\newcommand{\mlabel}[1]{%
  \(#1\)
}

\let\dlabel=\alabel
\let\ulabel=\mlabel

\newcommand{\dnode}[2][chj]{%
\node[#1,label={below:\dlabel{#2}}] {};
}

\newcommand{\dnodea}[3][chj]{%
\dnode[#1,label={above:\ulabel{#2}}]{#3}
}

\newcommand{\dnodear}[1]{%
\dnode[chj,label={above right:\dlabel{#1}}] {}
}

\newcommand{\dnodeanj}[2]{%
\dnodea[ch]{#1}{#2}
}

\newcommand{\dnodenj}[1]{%
\dnode[ch]{#1}
}

\newcommand{\dnodebr}[1]{%
\node[chj,label={below right:\dlabel{#1}}] {};
}

\newcommand{\dnoder}[2][chj]{%
\node[#1,label={right:\dlabel{#2}}] {};
}

\newcommand{\dydots}{%
\node[chj,draw=none,inner sep=1pt] {\dots};
}

\newcommand{\QRightarrow}{%
\begingroup
\tikzset{every path/.style={}}%
\tikz \draw (0,3pt) -- ++(1em,0) (0,1pt) -- ++(1em+1pt,0) (0,-1pt) -- ++(1em+1pt,0) (0,-3pt) -- ++(1em,0) (1em-1pt,5pt) to[out=-75,in=135] (1em+2pt,0) to[out=-135,in=75] (1em-1pt,-5pt);
\endgroup
}

\newcommand{\QLeftarrow}{%
\begingroup
\tikz
\draw[shorten >=0pt,shorten <=0pt] (0,3pt) -- ++(-1em,0) (0,1pt) -- ++(-1em-1pt,0) (0,-1pt) -- ++(-1em-1pt,0) (0,-3pt) -- ++(-1em,0) (-1em+1pt,5pt) to[out=-105,in=45] (-1em-2pt,0) to[out=-45,in=105] (-1em+1pt,-5pt);
\endgroup
}
\begin{align*}
&A_{2l-1}   &&\hspace{-15pt}
\begin{tikzpicture}[start chain]
\dnode{1}
\dnode{2}
\dydots
\dnode{l-1}
\dnode{l}
\dnode{l+1}
\dydots
\dnode{2l-2}
\dnode{2l-1}
\draw[latex-latex,shorten >=2mm,shorten <=2mm] (chain-1)  to[bend left]     (chain-9);
\draw[latex-latex,shorten >=2mm,shorten <=2mm] (chain-2)  to[bend left]     (chain-8);
\draw[latex-latex,shorten >=2mm,shorten <=2mm] (chain-4)  to[bend left]    (chain-6);
\end{tikzpicture}\\
%
&D_{l+1} &&\hspace{-15pt}
\begin{tikzpicture}
\begin{scope}[start chain]
\dnode{1}
\dnode{2}
\node[chj,draw=none] {\dots};
\dnode{l-1}
\dnode{l}
\end{scope}
\begin{scope}[start chain=br going above]
\chainin(chain-4);
\dnodear{l+1}
 \draw[latex-latex,shorten >=2mm,shorten <=2mm] (3.7,1)  to[bend left]     (chain-5);
 \end{scope}
\end{tikzpicture}
\end{align*}
\begingroup\vspace*{-\baselineskip}
\captionof{figure}{Finite Dynkin diagrams and the action of the automorphism $\nu$}\label{figure}
\vspace*{\baselineskip}\endgroup

Let us recall the construction of standard modules of twisted affine Lie algebra $\wtg[\whn]$ associated to $\g$ and $\nu$ from \cite{L1} (see also \cite{BKac,CalLM4,DL1,FLM1,FLM2,Li}). The central extension $\whq$ of the root lattice $Q$ by the cyclic group $\left<-1\right>=\left\{\pm 1\right\}$,  
\beq\label{ce1}
1\longrightarrow\left<-1\right>\longrightarrow\whq\overset{\overline{~~}}{\longrightarrow} Q\longrightarrow 1
\eeq
is determined by the commutator map
$ C_0(\alpha, \beta)=(-1)^{\langle \alpha, \beta \rangle}$,   
i.e.  by
$aba^{-1}b^{-1}=C_0(\overline{a},\overline{b})$ 
for $a,b \in \whq$. 
We fix a section $e:Q \rightarrow \whq$ normalized so that   $e_0=1$ and $\overline{e_{\alpha}}=\alpha$ for all $\alpha \in Q$   and  
$$
 e_{\alpha}e_{\beta}=\epsilon_{C_0}(\alpha,\beta)e_{\alpha+\beta}~ \text{ for all }~\alpha,\beta\in Q.
$$
Here $\epsilon_{C_0}(\cdot, \cdot)$ denotes the normalized 2-cocycle $\epsilon_{C_0}:Q\times Q \rightarrow \langle-1\rangle$ defined as in \cite{PS2}.

Let $\whn$ be the lifting of $\nu$ to the involutive automorphism of $\whq$ such that  
\beqn
  \overline{\whn a}=\nu \overline{a}  \quad \text{for all}\quad  a \in \whq\qquad\text{and} \qquad  \whn a=a \quad \text{ if }  \quad  \nu\overline{a}=\overline{a}.
   \end{equation*}
By the same symbol we denote the extension of the automorphism $\whn$ to the lattice vertex operator algebra $V_Q$.   
Moreover, for any $k$ we  write $\whn=\whn^{\otimes k}$ for the extension of   $\whn$   to the vertex operator algebra $V_Q^{\otimes k}$. Again, we have $\whn^2=1$ (cf. \cite{BKac,L1}). 
 
For any integer $m  $ set 
\beqn
\h_{(m)} = \{ x \in \h \  | \ \nu(x) =(-1)^m x \}.
\eeqn
This gives us the decomposition of the Cartan subalgebra
\begin{equation*}
\h = \h_{(0)} \oplus \h_{(1)},\qquad\text{where}\qquad 
\h_{(0)}=\bigoplus_{i=1}^l\mathbb{C}\left( \alpha_i + \nu\alpha_i\right)
\quad\text{and}\quad
\h_{(1)}=\bigoplus_{i=1}^l\mathbb{C}\left( \alpha_i - \nu\alpha_i\right).
\end{equation*}
For any $\alpha \in \h$ and $m\in\mathbb{Z}$ we write $\alpha_{(m)}$ for the projection of $\alpha$ onto $\h_{(m)}$. 

Let
\beqn
\whh[\nu] ^{\pm}= \mathfrak{h}_{(0)} \otimes  t^{\pm 1}\mathbb{C}[t^{\pm 1}] \oplus \h_{(1)} \otimes t^{\pm 1/2}\mathbb{C}[t^{\pm 1}].
\eeqn 
Consider the Heisenberg subalgebra 
\beqn
\whh[\nu] _{\frac{1}{2}\mathbb{Z}}= \whh[\nu] ^{+} \oplus \whh[\nu] ^{-}\oplus \mathbb{C}c  
\eeqn  
of the twisted affine Lie algebra
\beqn
\wth[\nu] = \coprod_{m \in \mathbb{Z}} \h_{(m)} \otimes t^{m/2} \oplus \mathbb{C}c\oplus \mathbb{C}d= \h_{(0)} \otimes  \mathbb{C}[t,t^{-1}] \oplus \h_{(1)} \otimes t^{1/2} \mathbb{C}[t,t^{-1}] \oplus \mathbb{C}c\oplus \mathbb{C}d.
\eeqn
Furthermore, let $S[\nu]$ be the irreducible $\whh[\nu]_{\frac{1}{2}\mathbb{Z}}$-module
\beqn
S[\nu]=U  ( \wth[\nu]  )\otimes_{U_+}\mathbb{C},\quad\text{where}\quad
U_+=U(\textstyle\prod_{m\in\frac{1}{2}\mathbb{Z}_{\geq 0}}\mathfrak{h}_{(2m)}\otimes t^m\oplus \mathbb{C}c\oplus \mathbb{C}d).
\eeqn
Note that $S[\nu]$ is linearly isomorphic to the   space of   symmetric algebra $S (\whh[\nu] ^{-} )$. 

Form the induced $\whq $-module  
\begin{equation*}
U_{T}=\mathbb{C}[\whq ]\otimes_{\mathbb{C}[\widehat{N}]}T\cong \mathbb{C}[Q/N],\end{equation*}
where   $\widehat{N}$ is the subgroup of $\whq $ obtained by pulling back $N=\h_{(1)}\cap Q$, and $T$ denotes the one dimensional $\widehat{N}$-module $\mathbb{C}$ with character $\tau: \widehat{N} \rightarrow \mathbb{C}^{\times}$. 
Note that $U_T$ can be regarded 
as an $\wth[\nu]$-module with
\beqn \label{opd}
d \cdot a=- \textstyle \frac{1}{2}\langle \overline{a}+\nu \overline{a},\overline{a}\rangle a.
\eeqn
Moreover, the $\wth[\nu]$-module $U_T$ is isomorphic to $\mathbb{C}[\pi_0 Q] \otimes T$, where $\pi_0$ denotes the projection from $\h$ onto $\h_{(0)}$ 
 (cf. \cite{L1,CalLM4}).

Let $V_{Q}^{T}$ be the irreducible $\whn$-twisted module for $V_Q$ given by
\begin{equation*}
V_{Q}^{T}=S[\nu]\otimes U_T.\end{equation*}
For each $e_{\alpha}\in\whq$, we consider $\whn$-twisted vertex operator
\beqn
Y^{\whn}(\iota(e_{\alpha}),z)=2^{-\frac{\left<\alpha,\alpha\right>}{2}}E^{-}(-\alpha,z)E^{+}(-\alpha,z)e_{\alpha}z^{\alpha_{(0)}+\frac{\left<\alpha_{(0)},\alpha_{(0)}\right>}{2}-\frac{\left<\alpha,\alpha\right>}{2}},\eeqn
 where
\beqn
E^{\pm}(-\alpha,z)=\text{exp}\left(\sum_{m\in\pm\frac{1}{2}\mathbb{Z}_{+}}\frac{-\alpha_{(2m)}(m)}{m}z^{-m}\right).
\eeqn
The component operators $x_{\alpha}^{\whn}(m)$  are defined by 
\beqn
Y^{\whn}(\iota(e_{\alpha}),z)=\sum_{m\in\frac{1}{2}\mathbb{Z}}x_{\alpha}^{\whn}(m)z^{-m-\frac{\left<\alpha,\alpha\right>}{2}}.
\eeqn
By \cite{L1}, 
they satisfy 
\beq\label{VertexOperators18}
x_{\nu\alpha}^{\whn}(m)= (-1)^{2m}x_{\alpha}^{\whn}(m) \quad\text{for all}\quad m \in\textstyle \frac{1}{2}\mathbb{Z}.
\eeq

We shall use the symbol $\whn$ to denote the
  lifting of the corresponding automorphism   on $\h$ to an automorphism of $\g$, which is given by
\beqn
\whn x_{ \alpha}= \begin{cases}
\epsilon_{C_0}(\alpha,\alpha) x_{ \nu \alpha}, &\text{if } \g\text{ is of type }A_{2l-1},\\
x_{ \nu \alpha}, &\text{if } \g\text{ is of type }D_{l+1}.
\end{cases}
\eeqn
For any $m\in\mathbb{Z}$ set
\beqn
\g_{(m)}=\{x\in\g\  | \ \whn(x)=(-1)^m x\} .
\eeqn
We shall denote by $x_{(m)}$ the projection of the element $x\in\g$ onto $\g_{(m)}$.
Let $\wtg[\whn]$ be the $\whn$-twisted affine Lie algebra  associated with $\g$ and $\whn$,
\beqn 
\wtg[\whn]=\whg[ \whn]\oplus\mathbb{C}d,\qquad\text{where}\qquad \whg[ \whn]=\coprod_{m\in\frac{1}{2}\mathbb{Z}}\mathfrak{g}_{(2m)}\otimes t^{m} \oplus\mathbb{C}c .
\eeqn
Its Lie  brackets are given by
\beqn
[x\otimes t^m,y\otimes t^n]=[x,y]\otimes t^{m+n}+\left<x,y\right>m\delta_{m+n,0}c, \quad 
[c,\wtg[ \whn]]=0,  \quad [d,x\otimes t^n]=n x\otimes t^n ,
\eeqn
for $x\in\mathfrak{g}_{(2m)}$, $y\in\mathfrak{g}_{(2n)}$ and $m,n\in\frac{1}{2}\mathbb{Z}$.
The Lie algebra $\wtg[ \whn]$ is isomorphic to the twisted affine Lie algebra of type $A_{2l-1}^{(2)}$ or $D_{l+1}^{(2)}$ with respect to the choice of $Q$ and $\nu$ (cf. \cite{K}).

The representation of $\wth[\nu]$ on $V_Q^T$ uniquely extends  to the representation of $\wtg[\whn]$   by 
\begin{equation*}
(x_{\alpha})_{(2m)}\otimes t^m\mapsto  x^{\hat{\nu}}_{\alpha}(m)\quad\text{for}\quad m\in\textstyle\frac{1}{2}\mathbb{Z},\, \alpha\in Q.
\end{equation*}
 Moreover, we have $V_Q^T \cong L^{\whn}(\Lambda_0)$, where $L^{\whn}(\Lambda_0)$  is the level one standard $\wtg[\whn]$-module of the highest weight $\Lambda_0$ with the highest weight vector $v_{  \Lambda_0 }=v_{L^{\whn}(\Lambda_0)}=1  \otimes 1$ (cf. \cite{L1}). 
 Let 
\beq \label{gama1}
\gamma=\pi_0 \lambda_j,
\eeq
where, as in \eqref{weight}, we set $j=1$ (resp. $j=l$) for $\g$ of type $A_{2l-1}$ (resp. $D_{l+1}$).  
The element  $\gamma \in \h_{(0)}$ satisfies 
$\left< \gamma , \alpha \right> \in \frac{1}{2}Q$  for all $\alpha \in Q$.
Following \cite{L1}  (see also \cite{CalLM4})  we obtain the level one standard  $\wtg[\whn]$-module $L^{\whn}(\Lambda_j)$  of the highest weight $\Lambda_j$  by defining a new representation of $\wth[\nu]$ on $V_Q^T$ as follows.  
The elements of $\h_{(0)}$ act on $U_T$ by
\beqn
\alpha^{\gamma} \cdot b \otimes t=\left< \alpha, \gamma +\overline{b}\right> b \otimes t
\eeqn
for $b \in \whq$ , $t \in T$ , $\alpha \in \h_{(0)}$, the operators $z^{\alpha^{\gamma}}$ act by 
\beqn
z^{\alpha^{\gamma}}\cdot b \otimes t= z^{\left<\alpha, \gamma + \overline{b} \right>}b \otimes t  
\eeqn
and  the Heisenberg algebra $\whh[\nu] _{\frac{1}{2}\mathbb{Z}}$ acts trivially. A $\gamma$-shifted $\whn$-twisted vertex operator $Y^{\whn, \gamma}(\iota(e_{\alpha}),z)$ is defined by
$$
Y^{\whn, \gamma}(\iota(e_{\alpha}),z)=
Y^{\whn}(\iota(e_{\alpha}),z)z^{\left<\alpha_{(0)}, \gamma\right>}
 = 2^{-\frac{\left<\alpha,\alpha\right>}{2}}E^{-}(-\alpha,z)E^{+}(-\alpha,z)e_{\alpha}z^{\alpha_{(0)}^{\gamma}+\frac{\left<\alpha_{(0)},\alpha_{(0)}\right>}{2}-\frac{\left<\alpha,\alpha\right>}{2}} 
$$
(cf. \cite{CalLM4}). The component operators $x_{\alpha}^{\whn, \gamma}(m)$, $m \in \frac{1}{2}\mathbb{Z}$, are given by  
\beqn 
Y^{\whn, \gamma}(\iota(e_{\alpha}),z)=\sum_{m\in\frac{1}{2}\mathbb{Z}}x_{\alpha}^{\whn, \gamma}(m)z^{-m-1}=x_{\alpha}^{\whn, \gamma}(z),
\eeqn
and they satisfy
\beq\label{svo}
x_{\alpha}^{\whn, \gamma}(m) =x_{\alpha}^{\whn}(m+\left<\alpha_{(0)}, \gamma\right>).
\eeq
 
Let $\whn_{\gamma}$ be  the automorphism of   Lie algebra  $\g$ such that
$$
\whn_{\gamma} h  = \nu h  \quad\text{for all} \quad h \in \h  
\qquad\text{and}\qquad
\whn_{\gamma} x_{\alpha}  = (-1)^{2\left<\alpha, \gamma\right>}\whn x_{\alpha}  \quad\text{for all} \quad\alpha \in Q.
$$ 
Then  $\whn_{\gamma}^2=1$  and the $\whn_{\gamma}$-twisted affine Lie algebra 
$\wtg[\whn_{\gamma}]$
associated to $\g$   coincides with $\wtg[\whn]$. Furthermore, from   \cite[Thm. 10.1]{L1} (see also   \cite[Thm. 4.2]{CalLM4}) follows that the new representation of $\wth[\nu]$ on $V_Q^{T,\gamma}$ uniquely extends  to the representation of $\wtg[\whn]$  by 
\begin{equation*}
(x_{\alpha})_{(2m)}\otimes t^m\mapsto  x^{\whn,\gamma}_{\alpha}(m)\quad\text{for all}\quad  m\in\textstyle\frac{1}{2}\mathbb{Z},\,\alpha\in Q. 
\end{equation*}
 It can be easily checked that we have the isomorphism of $\wtg[\whn]$-modules $V_Q^{T,\gamma}\cong L^{\whn}(\Lambda_j)$. We   denote the highest weight vector of $L^{\whn}(\Lambda_j)$ by $v_{ \Lambda_j}$. By    \cite[Prop. 5.4]{Li}, $ V_Q^{T,\gamma} $ is a $ \nu_{\gamma}\whn$-twisted $V_Q$-module, where 
$\nu_{\gamma}=e^{-2\pi \sqrt{-1}\gamma (0)}$
is an automorphism of $V_Q$.

\subsection{Higher level standard modules}
Let $k \in \mathbb{N}$ be fixed. The tensor product $(V_Q^{T, \gamma  })^{\otimes k}$ of $k$ copies of $\whn_{\gamma}$-twisted $V_Q$-modules $V_Q^{T, \gamma}$   
possesses a structure of  $\whn_{\gamma}$-twisted module for the vertex operator algebra $V_Q^{\otimes k}$ with vertex operators
\beqn 
Y^{\whn, \gamma}( v_1\otimes \cdots \otimes v_k, z)=Y^{\whn,\gamma}( v_1, z)\otimes \cdots \otimes Y^{\whn,\gamma}( v_k, z).\eeqn
The operators $x_{\alpha}^{\whn,\gamma} (m)$ on $(V_Q^{T, \gamma  })^{\otimes k}$ are given by
$$
Y^{\whn,\gamma}(x_{\alpha}(-1)\cdot({\bf 1}\otimes \cdots \otimes {\bf 1}), z)=\sum_{m \in \frac{1}{2}\mathbb{Z}}x_{\alpha}^{\whn, \gamma} (m)z^{-m-1}.
$$
 This construction gives the   diagonal action of the sublattice $kQ\subset Q$ on $(V_Q^{T, \gamma  })^{\otimes k}$:
\begin{align}
k\alpha \mapsto \rho (k\alpha)=e^{\alpha}\otimes \ldots \otimes e^{\alpha},\quad \alpha\in Q.\label{ro}
\end{align}

To simplify the notation, in the rest of the paper we will omit the symbol $\gamma$ and write $ x^{\whn} (z)$ for  $ x^{\whn, \gamma} (z)$ as well, as it will be clear from the context whether  $\gamma$ is   $0$ or $\pi_0\lambda_j$.
We shall need the following relations on the standard $\wtg[\whn]$-module $L^{\whn}(\Lambda)$, which are consequence of the adjoint action of $e_{\alpha}$ on $\wtg$ (cf. \cite{K}, see also \cite{OT,P}):
\begin{eqnarray}\label{e1}
e_{\alpha} d e_{\alpha}^{-1}&=&d + \alpha -\textstyle\frac{\langle\alpha_{(0)}, \alpha_{(0)}\rangle}{2}c,\\
\label{e2}
e_{\alpha} c e_{\alpha}^{-1}&=&c,\\
\label{e3}
e_{\alpha} h_{(0)} e_{\alpha}^{-1}&=&h_{(0)}-\langle\alpha_{(0)},h_{(0)}\rangle c, \ h \in \h, \\
\label{e4}
e_{\alpha} h_{(2s)}(s) e_{\alpha}^{-1}&=&h_{(2s)}(s), \ s \neq 0, \\
\label{e5}
e_{\alpha} x^{\whn}_{\beta}(s) e_{\alpha}^{-1}&=&x^{\whn}_{\beta}(s-\langle\beta_{(0)},\alpha_{(0)}\rangle).
\end{eqnarray}

For any positive integer $r  $ consider the twisted vertex operators 
\beq\label{kv1}
x_{r\alpha_i}^{\whn}(z)=Y^{\whn}(x_{\alpha_i}(-1)^r{\bf 1}, z)=\sum_{m \in \frac{1}{2}\mathbb{Z}}x_{r\alpha_i}^{\whn}(m)z^{-m-r}=(Y^{\whn}(x_{\alpha_i}(-1){\bf{1}},z))^r,
\eeq
associated with the vector $x_{\alpha_i}(-1)^r{\bf{1}} \in L(k\Lambda_0)$ (cf. \cite{Li}). They satisfy the commutator formula for twisted vertex operators,
\beqn\label{comform}
\left[ x_{\alpha}^{\whn}(z_1),  x_{r\beta}^{\whn}(z_2) \right]=
\sum_{s \geq 0} 
\frac{(-1)^s}{2s!}
\left(\frac{d}{dz_1}\right)^s z_2^{-1}  \sum_{q \in \mathbb{Z}/2\mathbb{Z}}\delta\left((-1)^{q}\frac{z_1^{\frac{1}{2}}}{z_2^{\frac{1}{2}}}\right)Y^{\whn}(\nu^q x_{\alpha}(s)x_{\beta}(-1)^{r}{\bf 1}, z_2).
\eeqn
For any $h  \in \h$ and $m \in \frac{1}{2}\mathbb{Z}$ we have
\beq\label{comform1}
\left[h_{(2m)}(m) ,  x_{r\alpha}^{\whn}(z ) \right]= r\langle h_{(2m)}, \alpha_{(-2m)}\rangle z^m x_{r\alpha}^{\whn}(z ).
\eeq

For any dominant integral weight $\lambda \in (\h_{(0)})^{\ast}$  let $U_{\lambda}$ be the finite-dimensional irreducible
$\g_{(0)}$-module of highest weight $\lambda$.  The generalized Verma module $N^{\whn}(\Lambda)$ associated with $U_{\lambda}$ is defined by 
$N^{\whn}(\Lambda)= U(\whg[\whn])\otimes_{U(\gtgeq)} U_{\lambda}$, 
where the action of the Lie algebra
$$\gtgeq=\bigoplus_{n \in \frac{1}{2}\mathbb{Z}_{\geq 0} } (\g_{(2n)} \otimes t^{n}) \oplus \mathbb{C} c$$
on $U_{\lambda}$ is given by
$$\g_{(2n)}\otimes t^n \cdot u=0\quad \text{for all}\quad n> 0 \qquad\text{and}\qquad c\cdot u=k u \quad\text{for all}\quad u\in U_{\lambda}.$$ 
Recall that the Poincar\'{e}--Birkhoff--Witt theorem for the universal enveloping algebra gives the vector space isomorphism
$$N^{\whn}(\Lambda) \cong U(\gtle)\otimes_{\mathbb{C}} U_{\lambda} ,\quad\text{where}\quad \gtle=\bigoplus_{n \in \frac{1}{2}\mathbb{Z}_{< 0} } (\g_{(2n)} \otimes t^{n}).$$

Let $V$ denote  the generalized Verma module $N^{\whn}(\Lambda)$ or its unique simple quotient $L^{\whn}(\Lambda)$. Then there exists a linear map 
\begin{align*}
Y^{\whn}: \g(-1)\otimes {\bf 1} &\rightarrow   (\text{End}V) [[z^{\frac{1}{2}}, z^{-\frac{1}{2}}]],\\
\nonumber
Y^{\whn} (x(-1)\otimes {\bf 1}, z) &=x^{\whn}(z)= \sum_{n \in \frac{1}{2}\mathbb{Z}} x^{\whn}(n)z^{-n-1} ,
\end{align*}
 which defines a structure of $\whn$-twisted $N(k\Lambda_0)$-module over $V$; see, e.g., \cite{Li}. Its coefficients   $x^{\whn}(n)$  satisfy the identity in \eqref{VertexOperators18} as well.

\subsection{Principal subspaces}
Consider 
 the nilpotent Lie subalgebra
\beqn
\mathfrak{n}=\bigoplus_{\alpha \in R_+}\mathbb{C}x_{\alpha}\eeqn
 of $\g$  generated by all root vectors $x_{\alpha}$ which  correspond to positive roots $\alpha \in R_+$. Let 
 \beqn
\wtn[\whn]=\overline{\n}[\whn] \oplus \mathbb{C}c ,\qquad\text{where}\qquad
\overline{\n}[\whn] =\coprod_{m \in \frac{1}{2}\mathbb{Z},\,\alpha\in R_+}\mathbb{C}{x_{\alpha}}_{(2m)} \otimes t^m,
\eeqn
be
the corresponding subalgebra of $\wtg[\whn]$.
Denote by $v_{\Lambda}$   the highest weight vector of   $V=N^{\whn}(\Lambda),L^{\whn}(\Lambda)$.
Following \cite{FS}, define  the
{\em  principal subspace}  of $V=N^{\whn}(\Lambda), L^{\whn}(\Lambda)$ by
\beqn
W_{V}^T= U(\overline{\n}[\whn])v_{\Lambda}.
\eeqn   

The principal subspace $W_{V}^T$ coincides with
\beqn
  U(\overline{\n}_{\alpha_l}[\whn])U(\overline{\n}_{\alpha_{l-1}}[\whn])  \cdots U(\overline{\n}_{\alpha_1}[\whn])v_{\Lambda}, \quad \text{where} \quad \overline{\n}_{\alpha_i}[\whn]=\coprod_{m\in\frac{1}{2}\mathbb{Z}}\mathbb{C}{x_{\alpha_i}}_{(2m)}\otimes t^m,
\eeqn
  (see   \cite[Lemma 3.1]{BuS} and \cite[Lemma 3.1]{G}). In addition, as with the untwisted case in \cite[Sect. 5]{BK}, for $V=N^{\whn}(\Lambda)$ we have the isomorphism  of $\overline{\n}[\whn]$-modules
\beq\label{zakarakter}
W_{N^{\whn}(\Lambda)}^T \cong U(\overline{n}[\whn]_{<0}) ,\qquad\text{where}\qquad
\overline{n}[\whn]_{<0} =\coprod_{m \in \frac{1}{2}\mathbb{Z}_{<0},\,\alpha\in R_+}\mathbb{C}{x_{\alpha}}_{(2m)} \otimes t^m.
\eeq

\section{Quasi-particle bases of principal subspaces}
From now on, we consider  the rectangular  highest weights $\Lambda$, as defined by \eqref{weight}.
We use the realization of the standard $\wtg[\whn]$-module $L^{\whn}(\Lambda) \cong U(\tilde{\mathfrak{g}}[\hat{\nu}]) \cdot v_{ \Lambda}$   as a submodule of $(V_Q^{T})^{\otimes k}$ with the highest weight vector 
\beq \label{jt}
v_{ \Lambda}=v_{ \Lambda_{j_k}}\otimes \cdots \otimes v_{ \Lambda_{j_1}}, \qquad \text{where} \qquad 
j_s=  \begin{cases}  0  & \text{for }  1   \leq   s  \leq  k_0,\\ 
j  & \text{for }   k_0+1  \leq  s \leq  k. \end{cases}
\eeq

\subsection{Twisted quasi-particles}\label{sdf21}
For  positive integers $r$ and $i=1,\ldots ,l$ and $m \in \frac{1}{2}\mathbb{Z}$ we define a {\em quasi-particle} of   {\em charge $r$}, {\em color $i$}  and {\em energy $-m$} as  the coefficient $x_{r\alpha_i}^{\whn}(m)$ of the twisted vertex operator $x_{r\alpha_i}^{\whn}(z)$, as  given by \eqref{kv1} (cf. \cite{BuS,G}).  Our goal is to construct the so-called quasi-particle bases of the principal subspaces, i.e. the bases consisting of some monomials of quasi-particles applied on the highest weight vector.
Charges of quasi-particles which appear in the basis of the principal subspaces $W^T_{L^{\whn}(\Lambda)}$ will be less than or equal to   $k$    since we have the integrability relations
\beq\label{integrabilnost}
x_{(k+1)\alpha_i}^{\whn}(z)=0
\eeq
on the standard $\wtg[\whn]$-modules $L^{\whn}(\Lambda)$ of level  $k$ (cf. \cite{Li}). On the other hand, the elements of basis for $W^T_{N^{\whn}(\Lambda)}$ will contain   quasi-particle of all charges. 

By  \eqref{kv1}, it  follows that  in the case of $W^T_{N^{\whn}(\Lambda)}$ all quasi-particles satisfy
\beq\label{kv2}
x_{r\alpha_i}^{\whn}(z)v_{\Lambda} \in  \begin{cases}
z^{-\frac{r}{2}}W^T_{N^{\whn}(\Lambda)}\left[\left[ z^{1/2}\right]\right], & \text{when }  \nu \alpha_i \neq \alpha_i,\\
  W^T_{N^{\whn}(\Lambda)}\left[\left[ z\right]\right], & \text{when }   \nu \alpha_i = \alpha_i, 
\end{cases} 
\eeq
while in the case of $W^T_{L^{\whn}(\Lambda)}$   we have
\beq\label{kv3}
x_{r\alpha_i}^{\whn}(z)v_{\Lambda} \in  \begin{cases}
z^{-\frac{r}{2}+\frac{1}{2}\sum_{s=1}^r\delta_{i,j_s}}W^T_{L^{\whn}(\Lambda)}\left[\left[ z^{1/2}\right]\right], & \text{when }  \nu \alpha_i \neq \alpha_i,\\
  W^T_{L^{\whn}(\Lambda)}\left[\left[ z\right]\right], & \text{when }  \nu \alpha_i = \alpha_i. 
\end{cases}  
\eeq

The quasi-particle bases of   principal subspaces  will be expressed in   the form
\begin{align}
bv_{\Lambda}&= b^{\whn}(\alpha_{l})\cdots b^{\whn}(\alpha_{1})v_{\Lambda} \nonumber\\
&=\underbrace{x^{\whn}_{n_{r_{l}^{(1)},l}\alpha_{l}}(m_{r_{l}^{(1)},l}) \cdots  x^{\whn}_{n_{1,l}\alpha_{l}}(m_{1,l})}_{b^{\whn}(\alpha_{l})}\cdots 
\underbrace{x^{\whn}_{n_{r_{1}^{(1)},1}\alpha_{1}}(m_{r_{1}^{(1)},1}) \cdots  x^{\whn}_{n_{1,1}\alpha_{1}}(m_{1,1})}_{b^{\whn}(\alpha_{1})}v_{\Lambda},\label{kv4}
\end{align}
where for $i=1,\ldots ,l$ and $t \in \mathbb{N}$ we have
$$
0 \leq n_{r_{i}^{(1)},i}\leq \cdots \leq  n_{1,i}, 
\qquad 
r^{(1)}_{i}\geq r^{(2)}_{i}\geq \cdots \geq  r^{(t)}_{i}\geq 0, 
\qquad 
m_{r_{i}^{(1)},i}\leq   \cdots \leq  m_{1,i}  .
$$
Here    $r^{(p)}_{i}$  for $1 \leq p \leq t$  represent the parts of the partition $\Rc_i=(r_{i}^{(1)}, r_{i}^{(2)}, \ldots , r_{i}^{(t)})$ of some fixed positive number $r_i$. The partition $\Rc_i$ is conjugate to the partition $\Rc'_i=(n_{r_{i}^{(1)},i},\ldots , n_{1,i})$. We now recall   terminology from \cite{BuS, G}. For every color $i=1,\ldots ,l$,   the number $r_i$ is said to be a {\em color-type}, a partition  $\Rc'_i$ a {\em charge-type}, a partition  $\Rc_i$ a {\em dual-charge-type} and a finite sequence of energies $\Ec_i=(m_{r_{i}^{(1)},i}, \ldots, m_{1,i})$ an {\em energy-type} of the monomial $b_i=b^{\whn}(\alpha_{i})$. Generalizing this notions to the quasi-particle monomial $b=b^{\whn}(\alpha_{l})\cdots b^{\whn}(\alpha_{1})$  we define its {\em color-type}  
$\mathcal{C}=\left(r_{l},\ldots, r_{1}\right)$, 
{\em charge-type}   $\Rc'=\left(\Rc'_l, \ldots ,\Rc'_1\right)$,
{\em dual-charge-type}    $\Rc=\left(\Rc_l, \ldots ,\Rc_1\right)$ 
and
{\em energy-type}    
$\Ec=\left(\Ec_l, \ldots, \Ec_1\right)$. Moreover, its {\em total charge} $\chg   b$ and {\em total energy} $\en b$ are defined by
$$ 
\chg   b=\sum_{i=1 }^l r_i\qquad\text{and}\qquad
 \en b=-m_{r_{l}^{(1)},l}- \cdots - m_{1,l}-\cdots - m_{r_{1}^{(1)},1}-  \cdots -m_{1,1}.
$$

Let $M_{QP}$ denote  the set of all quasi-particle monomials as in (\ref{kv4}). We introduce the linear order "$<$'' on the set of all monomials from $M_{QP}$ of fixed degree and $\h_{(0)}$-weight  as follows. Let $b$ and $\overline{b}$ be quasi-particle monomials   of  color-types $\mathcal{C}$ and   $\overline{\mathcal{C}}$, charge-types $\mathcal{R}'$ and   $\overline{\mathcal{R}'}$ and   energy-types $\mathcal{E}$ and  $\overline{\mathcal{E}}$, respectively. Then we write $bv_{\Lambda} < \overline{b}v_{\Lambda}$ if one of the following conditions holds:  
\begin{enumerate}
\item\label{xxx1}  $\chg b > \chg  \overline{b}$
\item\label{xxx2} $\chg  b = \chg  \overline{b}$\quad\text{and}\quad$\mathcal{C}<\overline{\mathcal{C}}$
\item\label{xxx3}   $\mathcal{C}=\overline{\mathcal{C}}$\quad\text{and}\quad$\en b < \en \overline{b}$
\item\label{xxx4}   $\mathcal{C}=\overline{\mathcal{C}}, \quad  \en b = \en \overline{b}$\quad\text{and}\quad$\mathcal{R}'<\overline{\mathcal{R}'}$
\item\label{xxx5} $\mathcal{R}'=\overline{\mathcal{R}'}$\quad\text{and}\quad$\mathcal{E}<\overline{\mathcal{E}}$,
\end{enumerate}
where for integer  sequences   we write
 $(x_p,\ldots ,x_1)< (y_r,\ldots ,y_1)$ 
if there exists $s$ such that 
\beq\label{order2}x_1=y_1,\,\ldots\,,\,x_{s-1}=y_{s-1}\qquad \text{and} \qquad s=p+1< r\quad \text{or}\quad x_s<y_s.
\eeq

\subsection{Quasi-particle bases of principal subspaces}
By  \eqref{kv2}  and  \eqref{kv3},    the vectors of the form  \eqref{kv4},   such that for $i=1,\ldots , l$ their quasi-particle energies satisfy
\beqn\label{rel1}
m_{p,i} \leq -\mu_i n_{p,i}  -\sum_{s=1}^{n_{p,i}}\mu_i \delta_{i, j_s} \qquad\text{with} \qquad  1\leq p \leq r_i^{(1)}\quad\text{and}\quad
 \mu_i\coloneqq\textstyle\frac{1}{2}\langle{\alpha_i}_{(0)}, {\alpha_{i}}_{(0)}\rangle ,
\eeqn
span entire principal subspace.
We shall now strengthen the constraints above by using relations among quasi-particles. 
First, by employing the commutator formula for twisted vertex operators on $N^{\whn}(\Lambda)$ we generalize  \cite[Lemma 4.2]{BuS} as follows. 
\begin{lem}\label{rel3}
Let   $  n_1, n_2  $ be positive integers and $\alpha,\beta\in R_+$ positive simple roots.
\begin{itemize}\label{rel4}
\item[a)] If $\langle \alpha, \beta \rangle=-1=\langle \nu \alpha, \beta \rangle$, we have 
\beqn\label{rel5}
(z_1-z_2)^{\text{min}\{n_1,n_2\}}x^{\whn}_{n_1\alpha}(z_1)x^{\whn}_{n_2\beta}(z_2)=(z_1-z_2)^{\text{min}\{n_1,n_2\}}x^{\whn}_{n_2\beta}(z_2)x^{\whn}_{n_1\alpha}(z_1).
\eeqn
\item[b)] If $\langle \alpha, \beta \rangle=-1$ and $\langle \nu \alpha, \beta \rangle=0$, we have \beqn\label{rel6}(z_1^{1/2}-z_2^{1/2})^{\text{min}\{n_1,n_2\}}x^{\whn}_{n_1\alpha}(z_1)x^{\whn}_{n_2\beta}(z_2)=(z_1^{1/2}-z_2^{1/2})^{\text{min}\{n_1,n_2\}}x^{\whn}_{n_2\beta}(z_2)x^{\whn}_{n_1\alpha}(z_1). \eeqn
\end{itemize}
\end{lem}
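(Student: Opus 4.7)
The plan is to apply the twisted commutator formula displayed after \eqref{kv1} (in its form for two higher-charge arguments, available from the general twisted Jacobi identity of \cite{Li}) to the pair $x^{\whn}_{n_1\alpha}(z_1)$, $x^{\whn}_{n_2\beta}(z_2)$. This writes the commutator as
\begin{equation*}
\sum_{s\geq 0}\frac{(-1)^s}{2\, s!}\left(\frac{d}{dz_1}\right)^{\!s} z_2^{-1}\sum_{q\in\mathbb{Z}/2\mathbb{Z}}\delta\!\left((-1)^q\frac{z_1^{1/2}}{z_2^{1/2}}\right)Y^{\whn}\!\left(\nu^q u^{(s)},z_2\right),
\end{equation*}
where $u^{(s)}$ denotes the $s$-th vertex algebra product of $x_\alpha(-1)^{n_1}\mathbf{1}$ with $x_\beta(-1)^{n_2}\mathbf{1}$ computed in the lattice vertex algebra $V_Q$. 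The standard pole analysis of simple-root lattice vertex operators, used in the same way as in \cite{G} and \cite[Lemma 4.2]{BuS}, gives $u^{(s)}=0$ once $s\geq\min\{n_1,n_2\}$, so the sum is truncated at $s\leq\min\{n_1,n_2\}-1$.

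For part (a), both $\langle\alpha,\beta\rangle$ and $\langle\nu\alpha,\beta\rangle$ equal $-1$, so both $q=0$ and $q=1$ summands can survive; their deltas $\delta\bigl((-1)^q z_1^{1/2}/z_2^{1/2}\bigr)$ are supported on $z_1=z_2$. After the $s$-fold derivative $\left(d/dz_1\right)^{s}$ with $s\leq \min\{n_1,n_2\}-1$, every surviving distribution is a linear combination of derivatives of $\delta(z_1/z_2)$ of order at most $\min\{n_1,n_2\}-1$, hence is annihilated upon multiplication by $(z_1-z_2)^{\min\{n_1,n_2\}}$.

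For part (b), the hypothesis $\langle\nu\alpha,\beta\rangle=0$ forces $x_{\nu\alpha}$ and $x_\beta$ to span commuting root spaces in $\g$. Using the description of $\whn$ on $\g$ recalled just before \eqref{e1}, together with the lattice structure of $V_Q$, this yields $\nu u^{(s)}=0$ for every $s\geq 0$, so only the $q=0$ term contributes. Its delta is now supported on $z_1^{1/2}=z_2^{1/2}$ with singularity of order at most $\min\{n_1,n_2\}-1$, which is eliminated by the smaller factor $(z_1^{1/2}-z_2^{1/2})^{\min\{n_1,n_2\}}$.

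The main obstacle is verifying the truncation $u^{(s)}=0$ for $s\geq\min\{n_1,n_2\}$ in the twisted setting and, in case (b), the vanishing $\nu u^{(s)}=0$ for all $s$. Both are direct analogs of computations carried out in \cite{G} and \cite[Lemma 4.2]{BuS} for the untwisted case and for $\Lambda=k\Lambda_0$, and require only the description of $\whn$ on $\g$ together with the standard OPE relations in the lattice vertex algebra $V_Q$; once they are in hand, the remainder is routine delta-function calculus and relies on no new input beyond \eqref{VertexOperators18}.
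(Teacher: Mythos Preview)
Your approach is correct and is precisely what the paper indicates: apply the twisted commutator formula and invoke the untwisted pole bounds from \cite{G} and \cite[Lemma~4.2]{BuS} for the truncation at $s=\min\{n_1,n_2\}$.

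One clarification for part~(b): what must vanish is not $\whn(u^{(s)})$---since $\whn$ is an automorphism of the vertex algebra, $\whn(u^{(s)})=0$ if and only if $u^{(s)}=0$, which is false for small $s$---but rather the $q=1$ contribution $(\whn a)_s b$ with $a=x_\alpha(-1)^{n_1}\mathbf 1$ and $b=x_\beta(-1)^{n_2}\mathbf 1$. This is the form the commutator formula takes once one extracts it from the twisted Jacobi identity of \cite{Li} (the automorphism acts on the \emph{first} argument before the $s$-th product, not on the product $a_s b$ afterwards). With that reading, your argument goes through: $\langle\nu\alpha,\beta\rangle=0$ gives $[x_{\nu\alpha},x_\beta]=0$ in $\g$, hence $(\whn a)_s b=\pm(x_{\nu\alpha}(-1)^{n_1}\mathbf 1)_s(x_\beta(-1)^{n_2}\mathbf 1)=0$ for every $s\ge0$, leaving only the $q=0$ delta $\delta(z_1^{1/2}/z_2^{1/2})$ and its $z_1$-derivatives of order at most $\min\{n_1,n_2\}-1$, which are annihilated by $(z_1^{1/2}-z_2^{1/2})^{\min\{n_1,n_2\}}$.
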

The interaction between quasi-particles of the same color  \cite[Lemma 4.1]{BuS} can be summarized as follows. 
\begin{lem}\label{rel7}
Let $i=1,\ldots ,l$ and $v\in W_{V}^T$ for 
$V=N^{\whn}(\Lambda), L^{\whn}(\Lambda)$.
For any charges $n_1, n_2$ such that $  n_1\geq n_2$ and  integer $M=m_1+m_2 $ the $2\mu_i n_2$ vectors
 \begin{align*}
&x^{\whn}_{n_1\alpha_i}(m_1)x^{\whn}_{n_2\alpha_i}(m_2)v ,\,\,x^{\whn}_{n_1\alpha_i}(m_1-1)x^{\whn}_{n_2\alpha_i}(m_2+1)v ,
\ldots\\
&\qquad\qquad\qquad\ldots,   x^{\whn}_{n_1\alpha_i}(m_1-2\mu_i n_2+1)x^{\whn}_{n_2\alpha_i}(m_2+2\mu_i n_2-1)v 
\end{align*}
can be expressed as a  finite  linear combination of the monomial  vectors
$$ 
x^{\whn}_{n_1\alpha_i}(s_1)x^{\whn}_{n_2\alpha_i}(s_2)v
\qquad\text{such that}
\qquad
 s_1+s_2=M\quad\text{and}\quad s_1 \leq m_1-2\mu_i n_2  \text{ or } s_2 < m_2 $$
and monomial vectors which contain a quasi-particle of color $i$ and charge $n_1+1$. 
Moreover, if $n_1=n_2$,  the monomial vectors 
$$x^{\whn}_{n_1\alpha_i}(m_1)x^{\whn}_{n_1\alpha_i}(m_2)v   \qquad\text{such that} \qquad   m_1+m_2=M \quad\text{and}\quad m_2-2\mu_in_1< m_1\leq m_2$$
can be expressed as a finite linear combination of monomial vectors
$$x^{\whn}_{n_1\alpha_i}(s_1)x^{\whn}_{n_1\alpha_i}(s_2)v   \qquad\text{such that} \qquad  s_1+s_2=M \quad\text{and}\quad  s_1\leq s_2-2\mu_in_1$$
and monomial vectors  which contain a quasi-particle of color $i$ and charge $n_1+1$. 
\end{lem}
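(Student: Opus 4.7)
My plan is to follow the strategy of \cite[Lemma 4.1]{BuS} and \cite{G}, adapted to the twisted setting on $V = N^{\whn}(\Lambda), L^{\whn}(\Lambda)$. The essential tool is the same-color operator product expansion
\begin{equation*}
x^{\whn}_{n_1\alpha_i}(z_1)\, x^{\whn}_{n_2\alpha_i}(z_2) \,=\, (z_1-z_2)^{2\mu_i n_1 n_2}\, \Phi(z_1,z_2),
\end{equation*}
where $\Phi(z_1, z_2)$ is a formal generating series whose coefficients, acting on $V$, are linear combinations of monomials containing a quasi-particle of color $i$ and charge $n_1 + n_2$. I would derive this from the twisted Jacobi identity for the $\whn$-twisted $V_Q$-module structure on $V$, applied to $x_{n_1\alpha_i}(-1)\mathbf{1}$ and $x_{n_2\alpha_i}(-1)\mathbf{1}$; the exponent $2\mu_i n_1 n_2$ reflects the squared-length pairing of the $\h_{(0)}$-components of the two vectors.

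For the first assertion, I would extract the coefficient of $z_1^{-m_1-n_1+j} z_2^{-m_2-n_2-j}$ from this identity for each $j = 0, 1, \ldots, 2\mu_i n_2 - 1$. After binomially expanding $(z_1 - z_2)^{2\mu_i n_1 n_2}$, this yields an upper-triangular linear system in the $2\mu_i n_2$ ``boundary'' monomials $x^{\whn}_{n_1\alpha_i}(m_1-j)\, x^{\whn}_{n_2\alpha_i}(m_2+j) v$; the off-diagonal contributions on the right-hand side are modes $x^{\whn}_{n_1\alpha_i}(s_1)\, x^{\whn}_{n_2\alpha_i}(s_2) v$ with $s_1 \leq m_1 - 2\mu_i n_2$ (or $s_2 < m_2$), together with modes of the higher-charge generating series $x^{\whn}_{(n_1+n_2)\alpha_i}$, which after reordering produce monomials containing a quasi-particle of color $i$ and charge at least $n_1 + 1$. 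Inverting the triangular system yields the first claim.

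For the second assertion ($n_1 = n_2 = n$), I would additionally use that two same-color, same-charge twisted vertex operators are interchangeable up to the sign $(-1)^{2\mu_i n^2}$ (arising from the cocycle on $\whq$), together with the $z_1 \leftrightarrow z_2$ symmetry of the factor $(z_1-z_2)^{2\mu_i n^2}$. Applying the first-part reduction in both orderings and combining the two resulting triangular systems collapses the two ``boundary strips'' into a single sharper constraint, giving the improved separation $s_1 \leq s_2 - 2\mu_i n$.

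The main technical obstacle will be the careful bookkeeping of the parameter $\mu_i$: when $\nu\alpha_i = \alpha_i$ we have $\mu_i = 1$ and $x^{\whn}_{n\alpha_i}(z)$ carries only integer powers of $z$, while for $\nu\alpha_i \neq \alpha_i$ we have $\mu_i = \tfrac{1}{2}$ and half-integer modes enter, requiring a parity analysis of the mode indices and, in the second part, a case split based on whether $(-1)^{2\mu_i n^2}$ equals $+1$ or $-1$. The rectangular highest weight $\Lambda$ itself does not affect these operator identities---they hold on $V$ irrespective of the choice of $v_\Lambda$---so once the OPE exponent has been correctly identified, the argument transfers from the case $\Lambda = k\Lambda_0$ treated in \cite{BuS}.
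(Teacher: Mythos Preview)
Your proposal is correct and matches the paper's treatment: the paper gives no proof of this lemma at all, stating it merely as a summary of \cite[Lemma 4.1]{BuS}, and your outline follows precisely that argument (the Georgiev-style same-color reduction transported to the twisted setting). One small imprecision worth tightening: in your displayed OPE the series $\Phi(z_1,z_2)$ does not literally have all coefficients of charge $n_1+n_2$; rather, the relations you extract express the boundary monomials modulo terms containing a quasi-particle of charge at least $n_1+1$ (as the lemma states), which arise after splitting $x^{\whn}_{n_1\alpha_i}(z_1)=x^{\whn}_{(n_1-n_2)\alpha_i}(z_1)\,x^{\whn}_{n_2\alpha_i}(z_1)$ and using the $n_2$--$n_2$ interaction.
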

Denote by $B_{W_{N^{\whn}(\Lambda)}}$ the set of all quasi-particle monomials of the form  \eqref{kv4} such that their energies $m_{p,i}$ satisfy the following difference conditions for all $i=1,\ldots ,l$:
\begin{align} 
m_{p,i} \leq & \,(1-2p)\mu_i n_{p,i} -\langle{\alpha_i}_{(0)}, {\alpha_{i-1}}_{(0)}\rangle\sum_{q=1}^{r_{i-1}^{(1)}}\min \textstyle\left\{n_{q,i-1},n_{p,i}\right\}-\displaystyle\sum_{s=1}^{n_{p,i}}\mu_i \delta_{i, j_s},
     \label{rel8}\\
m_{p+1,i}& \leq m_{p,i} -2\mu_i n_{p,i}  \quad \text{if} \quad n_{p+1,i}=n_{p,i},  \label{rel9}
\end{align}
where  we set $r_0^{(1)}\coloneqq 0$ and $j_s$ are given by \eqref{jt}. As for the standard module, we denote by $B_{W_{L^{\whn}(\Lambda)}}$ the subset of $B_{W_{N^{\whn}(\Lambda)}}$ such that its monomials contain only quasi-particles of charge less than or equal to $k$; recall \eqref{integrabilnost}. For $V=N^{\whn}(\Lambda),L^{\whn}(\Lambda)$ we have
\begin{thm}\label{t1}
For any highest weight $\Lambda$ as in \eqref{weight} the set $\mathcal{B}_{W_V}=\{bv_{\Lambda} \,| \,b \in B_{W_V} \}$ forms a basis of the principal space $W_{V }^T$.
\end{thm}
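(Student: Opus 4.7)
The plan is to treat spanning and linear independence separately, uniformly for $V = N^{\whn}(\Lambda)$ and $V = L^{\whn}(\Lambda)$.

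For the spanning direction I would follow the now-standard Georgiev-style reduction. By \eqref{kv2}--\eqref{kv3} together with iterated application of the twisted quasi-particles to $v_\Lambda$, every element of $W_V^T$ is already a linear combination of monomials \eqref{kv4} whose energies satisfy only the weakened bound $m_{p,i} \leq -\mu_i n_{p,i} - \sum_{s=1}^{n_{p,i}} \mu_i \delta_{i,j_s}$. The task is therefore to push the energies down further until \eqref{rel8} and \eqref{rel9} hold. Lemma \ref{rel7} yields \eqref{rel9}: any pair of same-color quasi-particles violating it is rewritten modulo monomials containing a strictly larger-charge color-$i$ factor. The cross-color term $-\langle {\alpha_i}_{(0)}, {\alpha_{i-1}}_{(0)}\rangle \sum_q \min\{n_{q,i-1}, n_{p,i}\}$ in \eqref{rel8} comes from Lemma \ref{rel3}: the factor $(z_1-z_2)^{\min}$ or $(z_1^{1/2}-z_2^{1/2})^{\min}$ can be re-expanded in the opposite order to shift exactly that many energies downward. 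Both moves strictly decrease the monomial in the order $<$ introduced earlier, and the integrability \eqref{integrabilnost} truncates charges at $k$ in the case $V = L^{\whn}(\Lambda)$, so the reduction terminates.

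For linear independence I would run the projection argument of Butorac--Sadowski \cite{BuS}. Assume a nontrivial relation $\sum_{b \in B_{W_V}} c_b\, b v_\Lambda = 0$ among vectors of $\mathcal{B}_{W_V}$ of fixed total charge and $\h_{(0)}$-weight, and let $b_0 v_\Lambda$ be the smallest such vector with $c_{b_0} \neq 0$. Processing colors $i = l, l-1, \ldots, 1$ in turn, for each $i$ I would apply the diagonal lattice shift $\rho(k\alpha_i)^{-1}$ from \eqref{ro}, use its conjugation effect \eqref{e5} to translate energies uniformly, and then extract the coefficient of the appropriate lowest power of $z$ from the generating function of the largest-charge color-$i$ quasi-particle. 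By the way $<$ was defined, this projection annihilates every monomial strictly larger than $b_0$ and leaves a residual relation on the truncated product $b_0^{\whn}(\alpha_{i-1}) \cdots b_0^{\whn}(\alpha_1) v_\Lambda$, to which induction on the color-type applies.

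The main obstacle, and the genuinely new ingredient compared to the case $\Lambda = k\Lambda_0$ treated in \cite{BuS}, appears in the projection step: for a truly rectangular weight $\Lambda = k_0\Lambda_0 + k_j\Lambda_j$ the module $L^{\whn}(\Lambda)$ is realized inside $(V_Q^T)^{\otimes k_0} \otimes (V_Q^{T,\gamma})^{\otimes k_j}$, so the $\gamma$-shift is present only in part of the tensor product. This asymmetry is exactly what the extra term $-\sum_{s=1}^{n_{p,i}} \mu_i \delta_{i,j_s}$ in \eqref{rel8} encodes, and it blocks a direct transport of the argument of \cite{BuS}. To handle it I would invoke Li's simple current operator from \cite{Li, Li1}: a linear bijection between the level-one $\whn$-twisted $V_Q$-modules $V_Q^T$ and $V_Q^{T,\gamma}$ intertwining the twisted vertex operators up to the controlled energy shift recorded in \eqref{svo}. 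Applying this bijection tensor-factorwise transports the residual relation into $(V_Q^T)^{\otimes k}$ with highest weight effectively $k\Lambda_0$, where the linear independence from \cite{BuS} forces $c_{b_0} = 0$, a contradiction. The hardest part of the write-up will be verifying that the simple current maps the $<$-minimal monomial in the original problem to the $<$-minimal monomial, with correctly matched shifted energies and charges, in the auxiliary $k\Lambda_0$ problem.
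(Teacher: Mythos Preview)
Your spanning argument matches the paper's: it, too, points to the Georgiev-style reduction using Lemmas~\ref{rel3} and~\ref{rel7} together with \eqref{kv2}--\eqref{kv3}, with \eqref{integrabilnost} truncating the charges in the standard-module case.

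For linear independence your key insight is correct---the genuinely new ingredient over \cite{BuS} is the simple current map $[\pi_0\lambda_j]$---but your organization differs from the paper's and makes the argument harder than it needs to be. The paper does \emph{not} redo the color-by-color induction with lattice shifts $\rho(k\alpha_i)^{-1}$ and coefficient extraction. Instead it first applies the Georgiev projection $\pi_{\mathcal{R}}$ (determined by the dual-charge-type of the smallest monomial) to the whole relation; this is the step your proposal omits, and it is the one that kills all monomials whose charge-type differs from that of $b_0$. Only \emph{after} this filtering does the paper apply $[\pi_0\lambda_j]^{\otimes k_j}\otimes 1^{\otimes k_0}$ tensor-factorwise, converting each surviving $b_a$ into a monomial $b_a^{+}$ on $v_{k\Lambda_0}^{\otimes k}$ that automatically satisfies the \cite{BuS} initial and difference conditions; the contradiction then comes by a single appeal to \cite[Thm.~5.1]{BuS}, not by an internal induction. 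This ordering completely sidesteps the difficulty you flag as ``the hardest part'': since all surviving monomials share the same charge-type and the simple current shifts all their energies by the same amount, there is no $<$-minimality to preserve. Also note that for $V=N^{\whn}(\Lambda)$ the paper handles linear independence separately, by citing \cite[Sect.~3]{Bu1} rather than \cite{BuS}.
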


The proof that $\mathcal{B}_{W_V}$ spans the principal subspace goes   in parallel with \cite[Sect. 5.5]{BK} and \cite[Sect. 5]{G}. Furthermore, the linear independence of $\mathcal{B}_{W_{N^{\whn}(\Lambda)}}$ can be verified by arguing as in \cite[Sect. 3]{Bu1}. As for the set   $\mathcal{B}_{W_{L^{\whn}(\Lambda)}}$, in comparison with \cite{BuS} (where the analogous basis for the highest weights of the form $k\Lambda_0$ is established), 
the proof of   linear independence in the rectangular case requires a certain new ingredient, the map $\left[\pi_0\lambda_j\right]$ which connects modules $L^{\whn}(\Lambda_0)$ and $L^{\whn}(\Lambda_j)$. In Subsection  \ref{ghj1}  below, we recall the Georgiev  projection $\pi_{\Rc}$ and  the aforementioned map $\left[\pi_0\lambda_j\right]$. Finally, in Subsection  \ref{ghj3} we employ them to finalize the proof.

\subsection{Georgiev's projection and the simple current map }\label{ghj1}
Let $\Rc$ be the dual-charge-type  of the quasi-particle monomial from  \eqref{kv4}.  Consider    Georgiev's projection $\pi_{\Rc}$ of the principal subspace  $W_{L^{\whn}(\Lambda) }^T$ on the tensor product space
$$
{W_{L^{\whn}(\Lambda_{j_k}) }^T}_{(r_l^{(k)}, \ldots, r_1^{(k)})} \otimes  \cdots  \otimes {W_{L^{\whn}(\Lambda_{j_1}) }^T}_{(r_l^{(1)}, \ldots, r_1^{(1)})} \subset  
  ({W_{L^{\whn}(\Lambda_{j}) }^T})^{\otimes k_j}\otimes ({W_{L^{\whn}(\Lambda_{0}) }^T})^{\otimes k_0} ,
$$
where   ${W^T_{L^{\whn}(\Lambda_{j_s})}}_{(r_l^{(s)}, \ldots, r_1^{(s)})}$, $s=1,\ldots ,   k$,
denote  the $\mathfrak{h}_{(0)}$-weight subspaces of the level one principal subspace $W^T_{ L^{\whn}(\Lambda_{j_s})}$ consisting of vectors of charges $r_l^{(s)}, \ldots, r_1^{(s)}$; see \cite{BuS,G} for more details. We will also use the symbol $\pi_{\Rc}$ to denote  the generalization of the projection   to the space of formal power series with coefficients in 
$({W_{L^{\whn}(\Lambda_{j}) }^T})^{\otimes k_j}\otimes ({W_{L^{\whn}(\Lambda_{0}) }^T})^{\otimes k_0}$. 
As in \cite[Sect. 5.1]{BuS},
one can use the  integrability relations \eqref{integrabilnost} at the level $k=1$, $x_{2\alpha_i}^{\widehat{\nu}}(z)^2=0$ for $i=1,\ldots ,l$,    to express the     projection of the monomial \eqref{kv4} as the coefficient of the   product of the corresponding vertex operators applied on the highest weight vector \eqref{jt}.

Denote by $\left[\pi_0\lambda_j\right]$ the identity map on the vector space $V_Q^T$ endowed with two different structures of level one standard $\wtg[\whn]$-modules, $L^{\whn}(\Lambda_0)$ and $L^{\whn}(\Lambda_j)$ (cf. \cite{DLM,Li,Li1}). By the construction of these modules follows that, for the suitably normalized highest weight vector $v_{\Lambda_j}$, we have for any root $\alpha$, $m \in \frac{1}{2}\mathbb{Z}$ and $i=1,\ldots ,l$  the   identities 
\begin{gather}
\left[\pi_0\lambda_j\right] v_{\Lambda_0}=v_{\Lambda_j},\label{id1}\\
x^{\whn}_{\alpha}(m)\left[\pi_0\lambda_j\right] = \left[\pi_0\lambda_j\right]x^{\whn}_{\alpha} ( m+ \langle{\lambda_j}_{(0)}, \alpha_{(0)}\rangle ),\label{id2}\\
\left[\pi_0\lambda_j\right] \circ e_{\alpha_i}= e_{\alpha_i} \circ \left[\pi_0\lambda_j\right] .\label{id21}
\end{gather}

Suppose $b$ is the quasi-particle monomial from \eqref{kv4}.
The equalities \eqref{id1}  and  \eqref{id2}  imply that  $\pi_{\mathcal{R}}b(\left[\pi_0\lambda_j\right] v_{\Lambda_0})^{\otimes k_j}\otimes v_{\Lambda_0}^{\otimes k_0} $ coincides with the coefficient $(\left[\pi_0\lambda_j\right]^{\otimes  k_j}\otimes 1^{\otimes k_0})\left(\pi_{\mathcal{R}}b^+v_{\Lambda_0} ^{\otimes k}\right)$  of the variables  
\begin{align*}
&  z_{r_i^{(1)},i}^{-m_{r_i^{(1)},i}-n_{r_i^{(1)},i }} \cdots  z_{r^{(k_0+1)}_{i}+1,i}^{-m_{r^{(k_0+1)}_{i}+1,i}-n_{r^{(k_0+1)}_{i}+1,i}}\cdots\\
&\cdots z_{r^{(k_0+1)}_{i},i}^{-m_{r^{(k_0+1)}_{i},i}-n_{r^{(k_0+1)}_{i},i}+\left<{\lambda_j}_{(0)}, {\alpha_i}_{(0)}\right>(n_{r^{(k_0+1)}_{i},i}-k_0)}\cdots z_{1,i}^{-m_{1,i}-n_{1,i}+\left<{\lambda_j}_{(0)}, {\alpha_i}_{(0)}\right>(n_{1,i}-k_0)}
\end{align*}
in the image of the (generalized) Georgiev  projection
$$\pi_{\mathcal{R}} \Big( x^{\whn}_{n_{r_{l}^{(1)},l}\alpha_{l}}(z_{r_{l}^{(1)},l})\cdots   x^{\whn}_{n_{1,1}\alpha_{1}}(z_{1,1}) \ (\left[\pi_0\lambda_j\right]v_{\Lambda_0})^{\otimes k_j}\otimes v_{ \Lambda_0}^{\otimes k_0}\Big).
$$
Here we denote by $b^+$ the quasi-particle monomial
\beq\label{id5}
b^+ = b^{\whn +}_{\alpha_l}\,\cdots \,b^{\whn +}_{\alpha_1}\eeq
such that its factors $b^{\whn +}_{\alpha_i}$, $i=1,\ldots ,l$,  are given by
\begin{align} 
b^{\whn +}_{\alpha_i}
=&\,x_{n_{r^{(1)}_{i},i}\alpha_{i}}(m_{r^{(1)}_{i},i})\cdots  x_{n_{r^{(k_0+1)}_{i}+1,i}\alpha_{i}}(m_{r^{(k_0+1)}_{i}+1,i})\cdots\nonumber\\
& \cdots x_{n_{r^{(k_0+1)}_{i},i}\alpha_{i}} (m_{r^{(k_0+1)}_{i},i}+\langle{\lambda_j}_{(0)}, {\alpha_i}_{(0)}\rangle(n_{r^{(k_0+1)}_{i},i}-k_0) )\cdots\nonumber\\
 &\cdots  x_{n_{1,i}\alpha_{i}} (m_{1,i}+\langle{\lambda_j}_{(0)}, {\alpha_i}_{(0)}\rangle(n_{1,i}-k_0) ).\label{id6}
\end{align}
Finally, it remains to observe that by \eqref{rel8} and \eqref{rel9} the energies of   $b^+v_{k\Lambda_0}$ satisfy the initial and difference conditions  for the  quasi-particle basis of  principal subspace $W_{L^{\whn}(k\Lambda_0) }^T$, as given by \cite[Thm. 5.1]{BuS}.

\subsection{Proof of linear independence}\label{ghj3}
We are now ready to complete the proof of Theorem \ref{t1}.
Suppose that the quasi-particle monomial $b$ of charge-type $\mathcal{R}'$ and dual-charge-type $\mathcal{R}$ is the smallest monomial, with respect to the linear ordering from Subsection \ref{sdf21}, in the linear combination (indexed by some finite set $A$),
\begin{equation}\label{lin1}
\sum_{a\in A}c_ab_av_{\Lambda}=0.
\end{equation}
We can assume that all quasi-particle monomials  $b_a \in B_{W_{L^{\whn}(\Lambda)}}$ are of the same color-type and that the scalars $c_a \in \mathbb{C}$ are nonzero.    Let us apply the projection $\pi_{\mathcal{R}}$ to  \eqref{lin1}. By its definition,  all nonzero summands which appear in 
\beqn\label{p7}
\sum_{a\in A}c_a\pi_{\mathcal{R}}b_av_{\Lambda}=0 
\eeqn
contain
monomials
 $b_a$ of the same charge-type $\mathcal{R}'$.
Using \eqref{id1} we find
\beqn
0=\sum_{a \in A}
c_{a}\pi_{\mathcal{R}}b_a(\left[\pi_0\lambda_j\right] v_{\Lambda_0})^{\otimes k_j}\otimes v_{ \Lambda_0}^{\otimes k_0}=(\left[\pi_0\lambda_j\right] ^{\otimes k_j}\otimes 1^{\otimes k_0})\sum_{a \in A}
c_{a}\pi_{\mathcal{R}}b^{+}_a v_{ \Lambda_0}^{\otimes k} 
\eeqn
with   $b^+_a$   as in \eqref{id5} and \eqref{id6}.
Dropping the invertible operator $\left[\pi_0\lambda_j\right] ^{\otimes  k_j}\otimes 1^{\otimes k_0}$, we   get 
\beqn \label{eq:d40}
\sum_{a \in A}
c_{a}\pi_{\mathcal{R}}b^{+}_a v_{ \Lambda_0}^{\otimes k}=0.
\eeqn
Thus, using the quasi-particle basis for the weights of the form $k\Lambda_0$, as given by \cite[Thm. 5.1]{BuS}, we obtain $c_a=0$, so that the 
 assertion of   Theorem \ref{t1} follows.

\section{Combinatorial bases of standard modules and parafermionic spaces}

In this section, we use the quasi-particle bases for prinicipal subspaces to construct combinatorial bases of standard modules and their parafermionic spaces. Throughout the section, $\Lambda$ denotes the rectangular highest weight; recall \eqref{weight}.

\subsection{Standard modules}

Let  $B_{U(\whh[\nu]^{-})}$ be the Poincar\'{e}--Birkhoff--Witt  basis   of the irreducible $\whh[\nu]_{\frac{1}{2}\mathbb{Z}}$-module $U(\whh[\nu]^{-})$ of level $k$ which consists of all elements
$h=h_{\alpha_l}\cdots h_{\alpha_1}$ such that for $i=1,\ldots ,l$
we have
\begin{gather}
h_{\alpha_i}=({\alpha_i}_{(2s_{t_i,i})}(-s_{t_i,i}))^{r_{t_i,i}}\ldots ({\alpha_i}_{(2s_{1,i})}(-s_{1,i}))^{r_{1,i}},\qquad\text{where}\qquad \label{oznaka}\\
t_i\in\mathbb{Z}_{\geq 0}\quad\text{and}\quad
r_{p,i}, s_{p,i}\in \mathbb{N}\quad\text{are such that}\quad s_{1,i}\leq  \ldots \leq  s_{t_{i},i} \quad \text{for all}\quad
 p =1,\ldots , t_i.\nonumber
\end{gather}
We shall now   generalize the definition of the linear order from Subsection \ref{sdf21} to the set 
\beq\label{starr}
\left\{ e_{\mu}\ts h\ts b \ts v_{\Lambda}  \,\,\big|\big.\,\, \mu \in Q, h \in  B_{U(\whh[\nu]^{-})}, b \in M_{QP}'\right\},
\eeq
where $M'_{QP}$ denotes the subset of $M_{QP}$ which consists of all   vectors of the form  \eqref{kv4} such that their quasi-particle charges   are less than or equal to $k-1$. Let  $e_{\mu}hbv_{\lambda}$ and $e_{\mu'} \overline{h} \overline{b}v_{\lambda}$ be any two elements of the set \eqref{starr} of the same degree and $\h_{(0)}$-weight. We shall write $e_{\mu}hbv_{\lambda} < e_{\mu'} \overline{h} \overline{b}v_{\lambda}$ if  we have $bv_{\lambda} <  \overline{b}v_{\lambda}$ (i.e., in other words, if one of the conditions \eqref{xxx1}--\eqref{xxx5} holds) or if the energy types    of $b$ and $\overline{b}$ coincide and we have $h< \overline{h}$. The order on   $B_{U(\whh[\nu]^{-})}$ is defined as follows. For any two elements 
$h=h_{\alpha_l}\cdots h_{\alpha_1}$ and     
$\overline{h} =\overline{h}_{ \alpha_l}\cdots \overline{h}_{ \alpha_1}$ of the basis $ B_{U(\whh[\nu]^{-})}$
 we write $h< \overline{h}$ if one of the  conditions holds:
\begin{enumerate}[(a)]
\item\label{xxxx1}   $(r_{t_l,l}, \ldots, r_{1,1}) < (\overline{r}_{\overline{t}_l,l}, \ldots, \overline{r}_{1,1}) $  
\item\label{xxxx2}    $(r_{t_l,l}, \ldots, r_{1,1}) = (\overline{r}_{\overline{t}_l,l}, \ldots, \overline{r}_{1,1}) $\,\, and\,\,  $(s_{t_l}, \ldots, s_1) < (\overline{s}_{\overline{t}_l}, \ldots, \overline{s}_1) $.
\end{enumerate} 
The   integer sequences in \eqref{xxxx1} and \eqref{xxxx2} have the same meaning as in \eqref{oznaka} and we compare them as in \eqref{order2}.

In order to construct the quasi-particle basis of  standard module $L^{\whn}(\Lambda)$ we need the canonical isomorphism of $d$-graded vector spaces given by Lepowsky-Wilson in \cite{LW2},
\begin{align}
U(\whh[\nu] ^{-}) \otimes L^{\whn}(\Lambda)^{\whh[\nu] ^{+}} \,&\xrightarrow{\hspace{5pt}\cong\hspace{5pt}}\, L^{\whn}(\Lambda)\label{LW}\\
\nonumber h\otimes u\,\,&\xmapsto{\hspace{17pt}}\,\, h\cdot u,
\end{align}
where $L^{\whn}(\Lambda)^{\whh[\nu] ^{+}}$ denotes the {\em vacuum subspace} of standard module  $L^{\whn}(\Lambda)$, i.e. 
\beq\label{vacuum_space}
L^{\whn}(\Lambda)^{\whh[\nu] ^{+}} =\left\{v\in L^{\whn}(\Lambda)\,\big|\big.\,\, \whh[\nu] ^{+} \hspace{-1pt}\cdot\hspace{-1pt} v=0\right\} .
\eeq
Moreover, the construction relies the vertex operator formula 
\beq\label{star}
\frac{1}{p!}(2zx^{\whn}_{\alpha}(z))^{p}=\frac{1}{q!}\epsilon_{C_0}(\alpha, -\alpha)^{-q}
E^{-}(-\alpha, z)(2zx^{\whn}_{-\alpha}(z))^qE^{+}(-\alpha, z)e_{\alpha}z^{\mu_{\alpha}+\alpha_{(0)}},
\eeq
 where $p,q \geq  0$ are such that  $k=p+q$  and $\mu_{\alpha}= \left<\alpha_{(0)}, \alpha_{(0)}\right>/2$  (cf. \cite{OT}). Formula \eqref{star} implies that the quasi-particle of color $i$ and charge $k$ acts on $L^{\whn}(\Lambda)$ as an operator
\beqn\label{star1}
 x^{\whn}_{k\alpha_i}(m)=e_{\alpha_i}^kh\quad\text{for some}\quad h \in U(\whh[\nu]).
\eeqn
Consequently, we do not need quasi-particles of the highest charge $k$ to form the   basis of the standard module.
Let $Q_{(0)}=\left\{\alpha_{(0)}\,|\,\alpha\in Q\right\}$.
We have
\begin{thm}\label{t2}
For any highest weight $\Lambda$ as in \eqref{weight}  the set $$\mathcal{B}_{L^{\whn}(\Lambda)}= \left\{ e_{\mu}\ts h\ts b\ts v_{ \Lambda} \,\,\big|\big.\,\, \mu \in Q_{(0)},\, h \in B_{U(\whh[\nu]^{-})},\, b \in B_{W_{L^{\whn}(\Lambda)}}\cap M'_{QP}\right\}$$
 forms a basis of the standard module $L^{\whn}(\Lambda)$.
\end{thm}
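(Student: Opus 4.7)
My plan is to reduce the basis statement for $L^{\whn}(\Lambda)$ to the basis statement for its principal subspace, Theorem \ref{t1}, by peeling off three ingredients in turn: the $e_{\mu}$-translations, the Heisenberg part $U(\whh[\nu]^-)$, and the highest-charge quasi-particles (those of charge $k$, which are excluded from $M'_{QP}$). The two main tools are the canonical Lepowsky--Wilson isomorphism \eqref{LW} and the vertex operator identity \eqref{star}, which together implement the exchange $x_{k\alpha_i}^{\whn}(m) \rightsquigarrow e_{\alpha_i}^{k} \cdot (\text{Heisenberg modes})$.

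For spanning, I would start from $L^{\whn}(\Lambda) = U(\wtg[\whn]) v_{\Lambda}$ and observe, using the triangular decomposition together with \eqref{LW}, that it suffices to show that every vector of the form $h \cdot w$ with $h \in B_{U(\whh[\nu]^-)}$ and $w \in W_{L^{\whn}(\Lambda)}^{T}$ lies in the span of $\mathcal{B}_{L^{\whn}(\Lambda)}$. By Theorem \ref{t1}, $w$ is a linear combination of quasi-particle monomials $b v_{\Lambda}$ with $b \in B_{W_{L^{\whn}(\Lambda)}}$. Whenever $b$ contains a quasi-particle $x_{k\alpha_i}^{\whn}(m)$ of maximal charge, I apply \eqref{star} with $p=k$, $q=0$ to rewrite that factor as $e_{\alpha_i}^{k}$ multiplied by an element of $U(\whh[\nu])$; then I use the commutation rules \eqref{e1}--\eqref{e5} to move all $e_{\alpha_i}$'s to the left (shifting the quasi-particle energies accordingly by \eqref{e5}, and absorbing zero-mode contributions by \eqref{e3}) and all Heisenberg modes into the middle factor. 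Iterating this procedure strips all charge-$k$ quasi-particles from $b$ and produces exactly vectors $e_{\mu}\,h\,b'\,v_{\Lambda}$ with $b' \in M'_{QP}$; the difference and initial conditions for $b'$ to lie in $B_{W_{L^{\whn}(\Lambda)}}\cap M'_{QP}$ can then be re-established by running the spanning argument of Theorem \ref{t1} once more on the residual quasi-particle factor.

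For linear independence, suppose $\sum_{a\in A} c_a\, e_{\mu_a}\, h_a\, b_a\, v_{\Lambda} = 0$. First, by \eqref{e3}, the operator $e_{\mu}$ shifts the $\h_{(0)}$-weight by $-k\mu_{(0)}$, while $h_a$ and $b_a$ move the weight only within the root lattice of $\g_{(0)}$; hence distinct cosets of $\mu$ contribute to distinct weight spaces and I may assume all $\mu_a$ are equal to a common $\mu$. Cancelling $e_{\mu}$ on the left (it acts bijectively on $V_Q^{T,\gamma}$ up to the shifts \eqref{e5}, which I reabsorb into the quasi-particles), the relation becomes $\sum_a c_a\, h_a\, b_a\, v_{\Lambda} = 0$. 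Using \eqref{LW}, I project this identity onto $U(\whh[\nu]^-)\otimes L^{\whn}(\Lambda)^{\whh[\nu]^+}$ and, separating minimal $h_a$ (with respect to the order on $B_{U(\whh[\nu]^-)}$ defined above), reduce to a relation $\sum_{a\in A'} c_a\, b_a\, v_{\Lambda} = 0$ inside $W_{L^{\whn}(\Lambda)}^{T}$; this kills all $c_a$ by Theorem \ref{t1}. As a safer alternative I would run the argument of Subsection \ref{ghj3} verbatim: apply Georgiev's projection $\pi_{\Rc}$ to isolate a single charge-type, then strip the $h_a$'s by the Heisenberg PBW property, and finally use the simple current operator $[\pi_0\lambda_j]$ to transfer the resulting relation to the $k\Lambda_0$-case already handled in \cite{BuS}.

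The main obstacle I expect is the Heisenberg/principal separation inside a fixed weight space: quasi-particles do not preserve the vacuum subspace \eqref{vacuum_space}, and each $b_a v_{\Lambda}$ already carries its own Heisenberg tail arising from commutators via \eqref{comform1}. To make the projection step rigorous, the monomial ordering on \eqref{starr} must be aligned with the Lepowsky--Wilson decomposition so that the minimal $(h_a, b_a)$-pair survives, and for this reason I would rely on Georgiev's projection combined with $[\pi_0\lambda_j]$ rather than on direct Heisenberg projection.
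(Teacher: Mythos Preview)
Your spanning outline is in the same spirit as the paper's (which defers details to \cite[Lemma 2.3]{BKP} together with Lemmas~\ref{rel3} and~\ref{rel7} and \eqref{star}). The linear-independence argument, however, has a genuine gap.

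Your first approach fails at its opening move: the weight argument does \emph{not} pin down~$\mu$. Since $\mu\in Q_{(0)}$, the shift $k\mu$ lies in the same lattice as the weight of any quasi-particle monomial, so $e_{\mu}\,h\,b\,v_{\Lambda}$ and $e_{\mu'}\,h'\,b'\,v_{\Lambda}$ can share both degree and $\h_{(0)}$-weight with $\mu\neq\mu'$ (compensate $k(\mu-\mu')$ by extra quasi-particles in $b'$, which is possible since there is no bound on their number). Hence you cannot first fix $\mu$ and cancel it; your subsequent Heisenberg/principal-subspace separation never gets started.

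Your fallback is closer in spirit to the paper but still misses the decisive mechanism. The paper does not reduce to Theorem~\ref{t1}. Instead it introduces a \emph{new} Georgiev-type projection $\pi_{\mathcal{R}_{\alpha_1}}$ on the full standard module (not on the principal subspace), working one colour at a time. The point is that $e_{\mu}$ acts diagonally on the tensor product $L^{\whn}(\Lambda_{j_k})\otimes\cdots\otimes L^{\whn}(\Lambda_{j_1})$, so any $e_{\alpha_1}$-factor inside $e_{\mu}$ deposits positive $\alpha_1$-weight in \emph{every} tensor slot, including the last one; choosing the projection so that $r_1^{(t)}=0$ for $t>p$ (which is possible precisely because all quasi-particle charges in $b$ are $\leq k-1$) then kills every summand in which the $\alpha_1$-weight is carried partly by~$e_{\mu}$. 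This is exactly the separation of different $(\mu,b)$-pairs that your weight argument attempted and could not achieve. After this projection the paper transfers to the $k\Lambda_0$ case via $[\pi_0\lambda_j]$ and then iterates the twisted $\Delta$-maps of \cite{Li} together with the bijections $e_{\alpha_1}$, as in \cite[Sect.~5.4]{BuS}, to strip off quasi-particles of colour~$1$ until $\chg b(\alpha_1)=0$; the entire procedure is then repeated for colours $2,\ldots,l$. There is no separate ``strip $h_a$ by Heisenberg PBW'' step: the Heisenberg factors and the $e_{\mu}$'s ride along through the whole reduction, and linear independence follows only at the end, once all quasi-particles have been removed.
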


\begin{proof}
The proof that   $\mathcal{B}_{L^{\whn}(\Lambda)}$ spans the standard module $L^{\whn}(\Lambda)$ relies on the quasi-particle  relations       \eqref{star}  and Lemmas \ref{rel3} and \ref{rel7}. It can be carried out by arguing as in \cite[Lemma 2.3]{BKP}, so we omit its details.

As for  the linear independence, its proof employs  the Georgiev-type projection 
\beqn\label{proj}
\pi_{\mathcal{R}_{\alpha_1}}: \big(L(\Lambda)\big)_{r_1 } \to 
L(\Lambda_{j_k})_{r_1^{(1)}} \otimes \ldots  \otimes L(\Lambda_{j_1})_{r_1^{(k)}},
\eeqn
with respect to the decomposition 
\begin{align}
&L^{\whn}(\Lambda)=\coprod_{r\in\mathbb{Z}} L^{\whn}(\Lambda)_r \subset  L^{\whn}(\Lambda_{j_k })  \otimes\ldots \otimes     L^{\whn}(\Lambda_{j_1}) , \qquad\text{where}\label{proj1}\\
&L^{\whn}(\Lambda)_r=\coprod_{r_2, \ldots, r_l\in\mathbb{Z}} L^{\whn}(\Lambda)_{k\Lambda\arrowvert_{\h_{(0)}}+r_l\alpha_l+\cdots +r_2\alpha_2+r\alpha_1}.\nonumber
\end{align}
The projection $\pi_{\mathcal{R}_{\alpha_1}}$, as well as its generalization to the space of formal power series with coefficients in \eqref{proj1} (which we again denote by the same symbol), are uniquely determined by the dual-charge type  $\textstyle\mathcal{R}_{\alpha_1}= (r_1^{(1)}, r_1^{(2)}, \ldots, r_1^{(k)} )$ and the  color-type $r_1=\sum_{t=1}^{k_{\alpha_1}} r_1^{(t)}$ with respect to the color $i= 1$. Consider   the projection of the vector
\begin{align*}
&e_{\mu}\,
({\alpha_l}_{(2s_{t_l,l})}(-s_{t_l,l}))^{r_{t_l,l}}\ldots ({\alpha_1}_{(2s_{1,1})}(-s_{1,1}))^{r_{1,1}}\\
&\times x^{\whn}_{n_{r_{l}^{(1)},l}\alpha_{l}}(m_{r_{l}^{(1)},l}) \ldots  x^{\whn}_{n_{1,l}\alpha_{l}}(m_{1,l})  
\ldots
x^{\whn}_{n_{r_{1}^{(1)},1}\alpha_{1}}(m_{r_{1}^{(1)},1}) \ldots  x^{\whn}_{n_{1,1}\alpha_{1}}(m_{1,1}) \,v_{ \Lambda} ,
\end{align*}
such that its submonomial with respect to color $i=1$,
$$b^{\whn}(\alpha_1)=x^{\whn}_{n_{r_{1}^{(1)},1}\alpha_{1}}(m_{r_{1}^{(1)},1}) \ldots  x^{\whn}_{n_{1,1}\alpha_{1}}(m_{1,1}),$$
is of dual-charge-type $\mathcal{R}_{\alpha_1}$. It  coincides with the corresponding  coefficient in
\begin{align*}
\pi_{\mathcal{R}_{\alpha_1}} 
&e_{\mu}\,
(\alpha_{l,-}(w_{t_l,l}))^{r_{t_l,l}}\ldots ( \alpha_{1,-}(w_{1,1}))^{r_{1,1}}\,
 \\
&\times x^{\whn}_{n_{r_{l}^{(1)},l}\alpha_{l}}(z_{r_{l}^{(1)},l}) \ldots  x^{\whn}_{n_{1,l}\alpha_{l}}(z_{1,l})
\ldots
x^{\whn}_{n_{r_{1}^{(1)},1}\alpha_{1}}(z_{r_{1}^{(1)},1}) \ldots  x^{\whn}_{n_{1,1}\alpha_{1}}(z_{1,1}) \, v_{ \Lambda },
\end{align*} 
where $\alpha_{i,-} (z)=\sum_{m<0} {\alpha_i}_{(2m)} (m) z^{-m-1}$ for $i=1,\ldots ,l$.

Suppose that there exists a finite linear combination 
\beq \label{maindk1}
\sum c_{\mu, h, b}\ts e_{\mu}\ts h\ts b\ts v_{ \Lambda}=0
\eeq
of vectors    $e_{\mu}  h  b  v_{ \Lambda}\in\mathcal{B}_{L^{\whn}(\Lambda)}$ 
of   the same degree   and $\h_{(0)}$-weight   such that all coefficients $c_{\mu, h, b}\in\mathbb{C}$ are nonzero. Choose any monomial vector in \eqref{maindk1}  of the form
\beq \label{maindk2}
e_{\mu }\ts h \ts b \ts v_{\Lambda} \qquad\text{with}\qquad {\chg b (\alpha_1)} =r_1 
\eeq
and   the maximal charge-type $\mathcal{R}'_{\alpha_1}$.  As before,   in \eqref{maindk2}  we write $b (\alpha_1)$ for the submonomial of $b $ in color $i=1$.
Its dual-charge-type 
$ 
\mathcal{R}_{\alpha_1}=\textstyle (r_1^{(1)}, \ldots, r_1^{(p)} )
$ 
in color $i=1$, where $p < k$ and $r_1=r_1^{(1)}+ \ldots +r_1^{(p)}$, uniquely determines the Georgiev-type projection $\pi_{\mathcal{R}_{\alpha_1}}$ (with $r_1^{(t)}=0$ for $t > p$). The projection $\pi_{\mathcal{R}_{\alpha_1}}$ ensures that only  nonzero summands in
\beq \label{maindk3}
\sum c_{\mu, h, b}\ts \pi_{\mathcal{R}_{\alpha_1}}\ts e_{\mu}\ts h\ts b\ts v_{ \Lambda}=0 
\eeq
are   those of the form as in  \eqref{maindk2}. Indeed,   from
$$e_{\alpha_1}\ts (v_{\Lambda_{j_k}}\otimes \cdots \otimes  v_{ \Lambda_{j_1}}) 
=e_{\alpha_1} v_{\Lambda_{j_k} }\otimes \cdots \otimes e_{\alpha_1} v_{ \Lambda_{j_1}}  ,$$
and 
\beqn e_{\mu }\ts h \ts b \ts v_{ \Lambda } \in \coprod_{\substack{r_1, \ldots, r_{k-1}\in\mathbb{Z}\\ r_k>0}} L^{\whn}(\Lambda_{j_k})_{r_1} \otimes \cdots \otimes L^{\whn}(\Lambda_{j_1})_{r_k}, \eeqn
follows that $\pi_{\mathcal{R}_{\alpha_1}}$  annihilates all   vectors $e_{\mu }\ts h \ts b \ts v_{\Lambda}$ in \eqref{maindk1} such that  $\chg b (\alpha_1)  <r_1$. 

By using the identities  \eqref{id1}--\eqref{id21} and arguing as in Subsection \ref{ghj3} we  write  \eqref{maindk3}  as
\beq \label{maindk4}
\sum c_{\mu, h, b}\ts \pi_{\mathcal{R}_{\alpha_1}}  e_{\mu}\ts h\ts b^+\ts v_{ k\Lambda_0}=0,
\eeq
where the monomials $b^+$ are found  as in  \eqref{id5} and \eqref{id6}. Hence we can now proceed with the iterated use of the constant terms of twisted $\Delta$-maps from \cite{Li} and the bijective map $e_{\alpha_1}$ as in \cite[Sect. 5.4]{BuS}, until we reduce  \eqref{maindk4}  to a linear combination of the vectors of the form $\pi_{\mathcal{R}_{\alpha_1}}  e_{\mu}\ts h\ts b \ts v_{ k\Lambda_0}$      which satisfy the   initial and difference conditions and $\chg b (\alpha_1)=0$; see  \cite[Thm. 5.1]{BuS} for more details. Next, we apply the same procedure for the remaining colors $i=2, \ldots , l$. Finally, we find that  all  coefficients $c_{\mu, h, b}$ in  \eqref{maindk1}  are zero, thus verifying the linear independence of the set  $\mathcal{B}_{L^{\whn}(\Lambda)}$.
\end{proof}

\subsection{Vacuum spaces}
Following   J. Lepowsky and R. Wilson \cite{LW1}, for a quasi-particle monomial  of charge-type 
$\Rc'=(n_{r_l^{(1)},l},\ldots ,n_{1,1})$ we define $\mathcal{Z}$-operators
\begin{align*}
\Zc_{\Rc'}(z_{r_l^{(1)},l},\ldots ,z_{1,1})=&\,\,
E^-(\alpha_l,z_{r_l^{(1)},l})^{n_{r_l^{(1)},l}/{k }} \cdots  E^-(\alpha_1,z_{1,1})^{n_{1,1}/{k }}
x^{\whn}_{\Rc'}(z_{r_l^{(1)},l},\ldots ,z_{1,1})\\
\nonumber
&\times E^+(\alpha_l,z_{r_l^{(1)},l})^{n_{r_l^{(1)},l}/{k }} \cdots  E^+(\alpha_1,z_{1,1})^{n_{1,1}/{k }},
\end{align*}
where
$$
x^{\whn}_{\Rc'}(z_{r_l^{(1)},l},\ldots ,z_{1,1})=x^{\whn}_{n_{r_l^{(1)},l}\alpha_l} (z_{r_l^{(1)},l}) \ldots
x^{\whn}_{n_{1,1}\alpha_1} (z_{1,1}).
$$
We shall write the above formal Laurent series as
$$
\Zc_{\Rc'}(z_{r_l^{(1)},l},\ldots ,z_{1,1})=
\sum_{m_{r_l^{(1)},l},\ldots ,m_{1,1}\in\mathbb{Z}}
\Zc_{\Rc'}(m_{r_l^{(1)},l},\ldots ,m_{1,1}) 
z_{r_l^{(1)},l} ^{-m_{r_l^{(1)},l}-n_{r_l^{(1)},l}}\cdots z_{1,1}^{-m_{11}-n_{11}}.
$$
Its coefficients 
act on the vacuum space, i.e. we have
\beq\label{z_action1}
\Zc_{\Rc'}(m_{r_l^{(1)},l},\ldots ,m_{1,1}) 
\colon
L^{\whn}(\Lambda)^{\whh[\nu] ^{+}}\to L^{\whn}(\Lambda)^{\whh[\nu] ^{+}}.
\eeq
By \eqref{e4} we also  have the action of the Weyl group translations $e_{\alpha}$ on the  vacuum space,  
\beqn\label{weyl_action}
e_{\alpha}\colon L^{\whn}(\Lambda)^{\whh[\nu] ^{+}}\to L^{\whn}(\Lambda)^{\whh[\nu] ^{+}}.
\eeqn  
Consider the projection
\beq\label{projection}
\pi^{\whh[\nu] ^{+}}\colon L^{\whn}(\Lambda) \to L^{\whn}(\Lambda)^{\whh[\nu] ^{+}}
\eeq
defined by the direct sum decomposition
$$
L^{\whn}(\Lambda)=L^{\whn}(\Lambda)^{\whh[\nu] ^{+}}\oplus\, \whh[\nu] ^{-} U(\whh[\nu] ^{-})\hspace{-1pt}\cdot\hspace{-1pt} L^{\whn}(\Lambda)^{\whh[\nu] ^{+}} .
$$
By \eqref{z_action1} it follows that  the action of projection \eqref{projection} satisfies
$$
\pi^{\whh[\nu] ^{+}}\colon
x^{\whn}_{\Rc'}(z_{r_l^{(1)},l},\ldots ,z_{1,1}) v_{\Lambda}
\mapsto \Zc_{\Rc'}(z_{r_l^{(1)},l},\ldots ,z_{1,1})v_{ \Lambda}.
$$
By employing the isomorphism  \eqref{LW} along with the projection \eqref{projection}, one easily obtains   the following consequence of   Theorem \ref{t2}:
\begin{thm}\label{t3}
For any highest weight $\Lambda$ as in \eqref{weight} the set of vectors
$$
e_\mu \ts \Zc_{\Rc'}(m_{r_l^{(1)},l},\ldots ,m_{1,1})  v_{\Lambda},
$$
where $\mu\in Q_{(0)}$ and the charge-type $\Rc'$ and the energy-type
$(m_{r_l^{(1)},l},\ldots ,m_{1,1}) $
satisfy the initial and difference conditions for $\mathcal{B}_{W_{L^{\whn}(\Lambda)}}$, forms a basis of the vacuum space $ L^{\whn}(\Lambda)^{\whh[\nu] ^{+}}$.
\end{thm}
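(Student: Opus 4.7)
The plan is to deduce Theorem \ref{t3} from Theorem \ref{t2} by pushing the basis $\mathcal{B}_{L^{\whn}(\Lambda)}$ through the projection $\pi^{\whh[\nu] ^{+}}$ and invoking the Lepowsky--Wilson isomorphism \eqref{LW}. First I would verify that $\pi^{\whh[\nu] ^{+}}$ commutes with each Weyl translation $e_\mu$, $\mu\in Q_{(0)}$. By \eqref{e4}, $e_\mu$ commutes with every Heisenberg generator $h_{(2s)}(s)$ with $s\neq 0$, so it stabilises each summand of the defining splitting $L^{\whn}(\Lambda)=L^{\whn}(\Lambda)^{\whh[\nu] ^{+}}\oplus \whh[\nu]^{-}U(\whh[\nu]^{-})\cdot L^{\whn}(\Lambda)^{\whh[\nu] ^{+}}$; in particular $e_\mu$ preserves the vacuum space and $\pi^{\whh[\nu] ^{+}}\circ e_\mu=e_\mu\circ\pi^{\whh[\nu] ^{+}}$. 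Combining this with the stated property $\pi^{\whh[\nu] ^{+}}(x^{\whn}_{\Rc'}(\cdot)v_\Lambda)=\Zc_{\Rc'}(\cdot)v_\Lambda$, I obtain
\[
\pi^{\whh[\nu] ^{+}}(e_\mu b v_\Lambda)=e_\mu\ts \Zc_{\Rc'}(m_{r_l^{(1)},l},\ldots,m_{1,1})\ts v_\Lambda
\]
for every quasi-particle monomial $b$ of the form \eqref{kv4}.

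Next I would evaluate $\pi^{\whh[\nu] ^{+}}$ on a generic basis vector $e_\mu h b v_\Lambda\in\mathcal{B}_{L^{\whn}(\Lambda)}$. Using that $e_\mu$ commutes with $U(\whh[\nu]^{-})$, rewrite it as $h\ts e_\mu b v_\Lambda$. For $h\neq 1$ the element $h$ lies in $\whh[\nu]^{-}U(\whh[\nu]^{-})$, so by \eqref{LW}
\[
h\cdot e_\mu b v_\Lambda\in \whh[\nu]^{-}U(\whh[\nu]^{-})\cdot L^{\whn}(\Lambda)\subseteq \whh[\nu]^{-}U(\whh[\nu]^{-})\cdot L^{\whn}(\Lambda)^{\whh[\nu] ^{+}}=\ker\pi^{\whh[\nu] ^{+}},
\]
hence $\pi^{\whh[\nu] ^{+}}(e_\mu h b v_\Lambda)=0$. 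Thus $\pi^{\whh[\nu] ^{+}}$ annihilates all basis vectors of $\mathcal{B}_{L^{\whn}(\Lambda)}$ except those with $h=1$, which it sends to $e_\mu\ts\Zc_{\Rc'}(\cdot)v_\Lambda$. Given any $w\in L^{\whn}(\Lambda)^{\whh[\nu] ^{+}}$, Theorem \ref{t2} provides $w=\sum c_{\mu,h,b}\ts e_\mu h b v_\Lambda$; applying $\pi^{\whh[\nu] ^{+}}$ (which fixes $w$) collapses the sum to $w=\sum c_{\mu,1,b}\ts e_\mu\Zc_{\Rc'}(\cdot)v_\Lambda$, proving that the proposed family spans the vacuum space.

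Linear independence I would establish by a graded-character comparison. Since \eqref{LW} is a graded isomorphism, $\mathrm{ch}\,L^{\whn}(\Lambda)=\mathrm{ch}\,U(\whh[\nu]^{-})\cdot\mathrm{ch}\,L^{\whn}(\Lambda)^{\whh[\nu] ^{+}}$, while Theorem \ref{t2} factors $\mathrm{ch}\,L^{\whn}(\Lambda)$ as the product of $\mathrm{ch}\,U(\whh[\nu]^{-})$ with the generating function for pairs $(\mu,b)$ with $\mu\in Q_{(0)}$ and $b\in B_{W_{L^{\whn}(\Lambda)}}\cap M'_{QP}$. Cancelling $\mathrm{ch}\,U(\whh[\nu]^{-})$, the proposed family has cardinality matching $\dim L^{\whn}(\Lambda)^{\whh[\nu] ^{+}}$ at every $d$-grade, because the $E^{\pm}$ factors dressing $x^{\whn}_{\Rc'}$ in the definition of $\Zc_{\Rc'}$ preserve the $d$-grading. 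A spanning family of the correct graded size is therefore a basis. No step here presents a serious obstacle, since the heavy lifting---quasi-particle linear independence via Georgiev-type projections and simple-current operators, together with the exponential identity \eqref{star}---has already been packaged into Theorem \ref{t2} and into the construction of $\pi^{\whh[\nu] ^{+}}$ through $\Zc$-operators; what remains is merely the careful bookkeeping of how $\pi^{\whh[\nu] ^{+}}$ interacts with $e_\mu$ and the Heisenberg factor $h$.
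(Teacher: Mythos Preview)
Your proposal is correct and follows essentially the approach the paper indicates: the paper states only that Theorem~\ref{t3} is obtained from Theorem~\ref{t2} ``by employing the isomorphism \eqref{LW} along with the projection \eqref{projection}'', and your argument spells this out---using $\pi^{\whh[\nu]^{+}}$ (together with its commutation with $e_\mu$ via \eqref{e4}) to get spanning, and the Lepowsky--Wilson factorisation of characters to match dimensions and conclude linear independence.
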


\subsection{Parafermionic spaces}

We now extend the definition of the parafermi\-o\-nic space of highest weight $k\Lambda_0$ from \cite{OT} (see also \cite{G2,BKP}   for the untwisted case) to any rectangular highest weight \eqref{weight}.
The {\em parafermi\-o\-nic space} is defined as the quotient
\beq\label{parafermion}
L^{\whn}(\Lambda)_{kQ}^{\whh[\nu] ^{+}}=L^{\whn}(\Lambda)^{\whh[\nu] ^{+}} \Big/ 
\text{span}\left\{(\rho(k\alpha) -1)\cdot v\,\,\big|\big.\,\, \alpha\in Q,\, v\in  L^{\whn}(\Lambda)^{\whh[\nu] ^{+}}\right\},
\eeq
where the map $\rho$ is given by  \eqref{ro}.
Denote by $\pi_{kQ}^{\whh[\nu] ^{+}}$ the canonical projection 
$$
\pi_{kQ}^{\whh[\nu] ^{+}}:L^{\whn}(\Lambda)^{\whh[\nu] ^{+}}\to L^{\whn}(\Lambda)_{kQ}^{\whh[\nu] ^{+}}.
$$
We have  
$$
L^{\whn}(\Lambda)_{kQ}^{\whh[\nu] ^{+}}
\cong\coprod_{\mu\in\Lambda+Q/kQ}L^{\whn}(\Lambda)_{\mu}^{\whh[\nu] ^{+}}.
$$
As in \cite{OT},  we assign to quasi-particle monomials of charge-type $\Rc'=(n_{r_l^{(1)},l},\ldots ,n_{1,1})$ the   $\Psi$-operators
\beqn
\Psi_{\Rc'} (z_{r_l^{(1)}\, l},\ldots , z_{1,1})=
\Zc_{\Rc'}(z_{r_l^{(1)}\, l},\ldots , z_{1,1})
z_{r_l^{(1)}\, l}^{-n_{r_l^{(1)},l}{ \alpha_l}_{(0)}  /k   }  \cdots  z_{1,1}^{-n_{1,1} { \alpha_1}_{(0)}  / k }\epsilon_{\alpha_l}^{-n_{1,l} \alpha_l  / k }\cdots \epsilon_{\alpha_1}^{-n_{1,1} \alpha_1 / k },
\eeqn
where $\epsilon_{\alpha}:=\epsilon_{C_0}(\alpha, \cdot)$ is the $2$-cocycle on the weight lattice $P$. 
In parallel with  \cite[Lemma 12]{OT}, we have
\begin{align}
\Psi_{n_r\beta_r,\ldots ,n_1\beta_1} (z_r,\ldots ,z_1)= 
\prod_{1\leq  p<s\leq  r} &
\bigg(\left(z^{1/2}_s-z^{1/2}_p\right)^{\left<n_s\beta_s,n_p\beta_p\right>/k}  \left(z^{1/2}_s+z^{1/2}_p\right)^{\left<n_s\nu\beta_s,n_p\beta_p\right>/k}\bigg. \nonumber\\
&\bigg. \times\epsilon_{C_0}(\beta_s, \beta_p)^{ n_sn_p/ k }\bigg)  \, \Psi_{n_r\beta_r} (z_r)\ldots \Psi_{n_1\beta_1} (z_1).\label{relpsi}
\end{align}
Since the $\Psi$-operators commute both with the action of the Heisenberg subalgebra $\whh[\nu] _{\frac{1}{2}\mathbb{Z}}$ and with the action of $\rho(kQ)$, their coefficients $\psi_{\Rc'}(m_{r_l^{(1)},l},\ldots ,m_{1,1})$, given by  
$$
\Psi_{\Rc'}(z_{r_l^{(1)},l},\ldots ,z_{1,1})=
\sum_{
m_{r_l^{(1)},l},\ldots ,m_{1,1}}
\psi_{\Rc'}(m_{r_l^{(1)},l},\ldots ,m_{1,1}) 
z_{r_l^{(1)},l} ^{-m_{r_l^{(1)},l}-n_{r_l^{(1)},l}}\ldots z_{1,1}^{-m_{1,1}-n_{1,1}},
$$
are induced operators on the quotient  \eqref{parafermion}. The summation on the right-hand side goes over all sequences $(m_{r_l^{(1)},l},\ldots ,m_{1,1})$ such that $m_{p,i}\in \frac{n_{p,i}}{k} \langle  {\alpha_i}_{(0)},  \mu \rangle+\mu_{i}\mathbb{Z}$ for all $1 \leq p \leq r_i^{(1)}$ when $\psi_{\Rc'}(m_{r_l^{(1)},l},\ldots ,m_{1,1})$ acts on the $\mu$-weight subspace of $L^{\whn}(\Lambda)^{\whh[\nu] ^{+}}$.  The following relations, which connect the coefficients of $\Psi$-operators and   $\mathcal Z$-operators, follow  from \cite[Lemma 10]{OT} (see also    \cite[Lemma 3.2]{BKP}):
\begin{align*} 
&\mathcal Z_{\Rc'} (m_{r_l^{(1)},l},\ldots ,m_{1,1}) \bigg|\bigg._{L^{\whn} (\Lambda)^{\whh[\nu] ^{+}}_\mu}=
\prod_{i=1}^l\prod_{p=1}^{r_i^{(1)}}
\bigg(\epsilon_{C_0}(\textstyle-\frac{1}{k}n_{p,i}\alpha_i, \mu)^{-1} \bigg. \\
&\bigg.\qquad\qquad\qquad\qquad\times\psi_{\Rc'}\textstyle(m_{r_l^{(1)},l}+\frac{1}{k}n_{r_l^{(1)},l}\langle{\alpha_l}_{(0)},\mu\rangle, \ldots , m_{1,1}+\frac{1}{k}n_{1,1}\langle{\alpha_1}_{(0)},\mu\rangle\hspace{-2pt})\bigg|\bigg._{L^{\whn} (\Lambda)^{\whh[\nu] ^{+}}_\mu}\bigg).
\end{align*}
Finally,  Theorem \ref{t3} implies 

\begin{thm}\label{t4}
For any highest weight $\Lambda$ as in \eqref{weight} the set of vectors
\begin{align*}
&\pi_{kQ}^{\whh[\nu] ^{+}} \Zc_{\Rc'}(m_{r_l^{(1)},l},\ldots ,m_{1,1})  v_{\Lambda}\\
&\qquad =
\psi_{\Rc'}(\textstyle m_{r_l^{(1)},l}+\frac{1}{k}n_{r_l^{(1)},l}\langle{\alpha_l}_{(0)},\Lambda\rangle,\ldots ,m_{1,1}+\frac{1}{k}n_{1,1}\langle{\alpha_1}_{(0)},\Lambda\rangle)  v_{\Lambda},
\end{align*}
where    the charge-type $\Rc'$ and the energy-type
$(m_{r_l^{(1)},l},\ldots ,m_{1,1}) $
satisfy the initial and difference conditions for $\mathcal{B}_{W_{L^{\whn}(\Lambda)}}$,
forms a basis of the parafermionic space $L^{\whn} (\Lambda)^{\whh[\nu] ^{+}}_{kQ}$.
\end{thm}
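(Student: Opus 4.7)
\emph{Strategy.} The plan is to deduce Theorem \ref{t4} from Theorem \ref{t3} via the parafermionic projection $\pi_{kQ}^{\whh[\nu] ^{+}}$, exploiting two observations. First, the displayed formula preceding Theorem \ref{t4} relates $\Zc_{\Rc'}(m_{\bullet})$ on each $\mu$-weight subspace to a weight-shifted $\psi_{\Rc'}$ via an invertible $2$-cocycle scalar. Second, because of the direct sum decomposition $L^{\whn}(\Lambda)_{kQ}^{\whh[\nu] ^{+}} \cong \coprod_{\mu \in \Lambda+Q/kQ} L^{\whn}(\Lambda)_{\mu}^{\whh[\nu] ^{+}}$, each weight component of the parafermionic space is canonically isomorphic, for any choice of representative $\mu$, to the $\mu$-weight subspace of the vacuum space. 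Consequently, the Weyl-translation parameter $e_{\mu'}$ of Theorem \ref{t3} (which shifts weights across $kQ_{(0)}$) collapses in the parafermionic quotient: the operators $e_{\mu'}$ for $\mu' \in Q_{(0)}$ implement precisely the identifications $\rho(kQ) \equiv 1$, so they become redundant after passing to the quotient.

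\emph{Spanning.} Applying $\pi_{kQ}^{\whh[\nu] ^{+}}$ to the basis of Theorem \ref{t3} immediately yields a spanning set of the parafermionic space. By the $\Zc\leftrightarrow\psi$ identification, each image $\pi_{kQ}^{\whh[\nu] ^{+}}(e_{\mu'}\, \Zc_{\Rc'}(m_{\bullet}) v_{\Lambda})$ is, up to a nonzero scalar, the vector $\psi_{\Rc'}(m_{\bullet}+\tfrac{1}{k} n_{p,i}\langle{\alpha_{i}}_{(0)},\Lambda\rangle)\, v_{\Lambda}$ of Theorem \ref{t4}, independent of $\mu'$. As $\Rc'$ ranges over all charge-types, the total charges $(r_{1},\ldots,r_{l})$ shift the weight across every coset of $\Lambda + Q/kQ$ (since the projections ${\alpha_{i}}_{(0)}$ generate $Q_{(0)}$), so no parafermionic weight component is missed.

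\emph{Linear independence and the main obstacle.} Suppose $\sum_{i} c_{i}\, \psi_{\Rc_{i}'}(\cdot)\, v_{\Lambda} = 0$ holds in the parafermionic space. Splitting this relation according to the weight cosets in $\Lambda + Q/kQ$, the canonical identification in the decomposition above lifts each sub-sum of a fixed coset to a linear combination of $\Zc$-operator vectors (via the invertible $\Zc\leftrightarrow\psi$ rescaling) in a single weight subspace of $L^{\whn}(\Lambda)^{\whh[\nu] ^{+}}$, which by Theorem \ref{t3} forces all $c_{i}$ in that sub-sum to vanish. The main technical obstacle is the bookkeeping in this canonical identification: tracking precisely how the $e_{\mu'}$-parameters of Theorem \ref{t3} are absorbed into the $\rho(kQ)\equiv 1$ identifications, together with the invertible scalars from $\Zc\leftrightarrow\psi$, so that each parafermionic coset corresponds uniquely to a single vacuum-space weight subspace. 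This parallels \cite[Thm. 13]{OT} and the untwisted analogue \cite[Sect. 3]{BKP}.
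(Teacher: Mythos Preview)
Your proposal is correct and follows essentially the same route as the paper, which records the result with the single phrase ``Finally, Theorem \ref{t3} implies''; you have simply spelled out the implication by tracking how the $e_{\mu}$-parameter in the vacuum-space basis is absorbed by the $\rho(kQ)\equiv 1$ identifications and how the $\Zc\leftrightarrow\psi$ rescaling (the displayed formula immediately preceding the theorem) converts the remaining vectors into the stated parafermionic basis. No genuine gap is present; the bookkeeping you flag as the ``main obstacle'' is exactly the content that the paper leaves implicit, and your references to \cite{OT} and \cite{BKP} for the parallel arguments are apt.
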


\section{Character formulae and combinatorial identities}

In this section, we use our main results, Theorems \ref{t1}, \ref{t2}, \ref{t3} and \ref{t4} to compute the  character formulae of the corresponding bases. Furthermore, as an application, in the case of the principal subspace of the generalized Verma module we obtain two new families of combinatorial identities. Throughout the entire section,   $\Lambda$ denotes an arbitrary  rectangular highest weight, i.e. the weight of the form \eqref{weight}.

\subsection{Principal subspaces}
Let $V=N^{\whn}(\Lambda),L^{\whn}(\Lambda)$.
We define the character of the principal subspace $W_{V }^T$ by
$$\mathop{\text{ch}}   W_{V }^T= \sum_{m,r_1,\ldots, r_l\geq 0} 
\dim {W_{V}^T}_{(m,r_1,\ldots, r_l)}q^{m}y^{r_1}_{1}\cdots y^{r_l}_{l},
$$
where ${W_{V }^T}_{(m,r_1,\ldots, r_l)}$ is the weight subspace of the Cartan subalgebra $\h_{(0)}\oplus \mathbb{C}c\oplus \mathbb{C}d$ of weight $-m\delta +r_1{\alpha_1} +\cdots + r_l{\alpha_l}$.

For the dual-charge-type of  quasi-particle monomial as in  \eqref{kv4}  let $p_{i}^{(s)}$ be the number of quasi-particles of color $i$ and  charge $s=1,\ldots ,t$ which appear in the monomial.  Furthermore, we organize the numbers $p_{i}^{(s)}$ into the sequence $\Pc=(\Pc_1, \ldots, \Pc_l)$ such that $\Pc_i =(p_i^{(1)},\ldots ,p_{i}^{(t)})$. Also, we write
$$(a;q)_r=\prod_{i=1}^r (1- aq^{i-1}) \quad \text{for} \quad r\geq  0
\qquad \text{and} \qquad 
(a; q)_{\infty} =\prod_{i\geq  1} (1- aq^{i-1}).
$$
In addition, to express the character formulae we shall need the following notation 
$$
\frac{1}{(q^{\mu_i}; q^{\mu_i})_r}= \sum_{s \geq 0}p_r(s)q^{\mu_is},
$$
where $p_r(s)$ is  the number of partitions of $s$ with most $r$ parts.
By arguing as in \cite[Sect.5]{G}  and using the difference    conditions  \eqref{rel8} and \eqref{rel9}, which  determine the quasi-particle basis of the principal subspace of standard module via Theorem \ref{t1}, we find
\begin{thm}\label{CPSL}
For the affine Lie algebras $A_{2l-1}^{(2)}$ and $D_{l+1}^{(2)}$ we have
$$\mathop{\text{ch}}     W_{L^{\whn}(\Lambda) }^T=\sum_{\Pc}
\frac{q^{\frac{1}{2} \sum_{i,r=1}^{l}\sum_{m,n=1}^k \langle{\alpha_i}_{(0)}, {\alpha_{r}}_{(0)}\rangle \mathrm{min}\{m,n\} p_i^{(m)}p_r^{(n)}+\tilde{p}_j}}
{\prod_{i=1}^{l}\prod_{s=1}^{k}(q^{\mu_i};q^{\mu_i})_{p_i^{(s)}}}\prod_{i=1}^{l}y^{n_i}_{i},$$
where $\tilde{p}_j=\sum_{s=k_0+1}^k (s-k_0) \mu_jp_j^{(s)}$ and $n_i=\sum_{s=1}^k s p_i^{(s)}$. 
The sum goes over all finite sequences $\Pc=(\Pc_1, \ldots, \Pc_l)$ of $lk$ nonnegative integers.
\end{thm}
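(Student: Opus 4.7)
The approach is to start from the quasi-particle basis $\mathcal{B}_{W_{L^{\whn}(\Lambda)}}$ supplied by Theorem~\ref{t1}, so that
\begin{equation*}
\mathop{\text{ch}} W_{L^{\whn}(\Lambda)}^T = \sum_{b v_{\Lambda}\in \mathcal{B}_{W_{L^{\whn}(\Lambda)}}} q^{\en b}\, y_1^{r_1}\cdots y_l^{r_l},
\end{equation*}
and then to stratify the sum by dual-charge-type $\mathcal{R}$, equivalently by the sequence $\mathcal{P}=(\mathcal{P}_1,\ldots,\mathcal{P}_l)$ of tuples $\mathcal{P}_i=(p_i^{(1)},\ldots,p_i^{(k)})$ recording how many quasi-particles of color $i$ and charge $s$ appear. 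The color-type contribution $\prod_i y_i^{n_i}$ with $n_i=\sum_s s\ts p_i^{(s)}$ is immediate, so the problem reduces to evaluating the generating function of admissible energy tuples $(m_{p,i})$ satisfying \eqref{rel8} and \eqref{rel9} for fixed $\mathcal{P}$.

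For the inner sum I would isolate a minimum-energy configuration from a free partition part. Let $M_{p,i}$ denote the right-hand side of \eqref{rel8}, the maximum allowed value of $m_{p,i}$, and substitute $m_{p,i}=M_{p,i}-b_{p,i}$ with $b_{p,i}\in\mu_i\mathbb{Z}_{\geq 0}$. Inside a block of consecutive quasi-particles with the same charge $n_{p,i}=n_{p+1,i}$, the identity $M_{p+1,i}-M_{p,i}=-2\mu_i n_{p,i}$ shows that \eqref{rel9} becomes simply $b_{p+1,i}\geq b_{p,i}$, whereas across blocks in the same color where the charge drops strictly, the inequality $M_{p+1,i}\leq M_{p,i}$ already forces the ordering $m_{p+1,i}\leq m_{p,i}$ and imposes no additional constraint. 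Consequently, for each pair $(i,s)$ the block of $p_i^{(s)}$ energies contributes an independent weakly increasing sequence in $\mu_i\mathbb{Z}_{\geq 0}$, whose $q$-generating function is $1/(q^{\mu_i};q^{\mu_i})_{p_i^{(s)}}$; the product over all $(i,s)$ yields the denominator in the claimed formula.

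It then remains to identify the minimum total energy $-\sum_{i,p} M_{p,i}$ with the exponent. The three summands in $M_{p,i}$ contribute, after invoking the classical identities
\begin{equation*}
\sum_p (2p-1)n_{p,i}=\sum_{m,n=1}^k \min\{m,n\}\ts p_i^{(m)}p_i^{(n)},\qquad \sum_{p,q}\min\{n_{q,i-1},n_{p,i}\}=\sum_{m,n=1}^k \min\{m,n\}\ts p_i^{(m)}p_{i-1}^{(n)},
\end{equation*}
together with $2\mu_i=\langle{\alpha_i}_{(0)},{\alpha_i}_{(0)}\rangle$, the symmetry of $\langle{\alpha_i}_{(0)},{\alpha_r}_{(0)}\rangle$ and its vanishing for $|i-r|>1$ in both folded types $A_{2l-1}^{(2)}$ and $D_{l+1}^{(2)}$, precisely the quadratic form $\frac{1}{2}\sum_{i,r,m,n}\langle{\alpha_i}_{(0)},{\alpha_r}_{(0)}\rangle\min\{m,n\}p_i^{(m)}p_r^{(n)}$; the remaining piece, by \eqref{jt}, is supported on $i=j$ and collapses to $\tilde p_j=\sum_{s>k_0}(s-k_0)\mu_j p_j^{(s)}$. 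The main obstacle I expect is the cross-block compatibility verification in the middle step, confirming that the difference conditions do not couple blocks of different charges within a single color, together with the careful reorganization of the quadratic form via the $\min\{m,n\}$ identities. Summing over all admissible $\mathcal{P}$ then produces the stated character formula.
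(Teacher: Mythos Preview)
Your approach is correct and is exactly what the paper does: the paper gives no detailed argument for this theorem, writing only that one computes ``by arguing as in \cite[Sect.~5]{G}'' from the difference conditions \eqref{rel8}--\eqref{rel9} defining the basis in Theorem~\ref{t1}, which is precisely the Georgiev-style stratification by $\mathcal{P}$ and ground-state-plus-partition decomposition you describe.

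One remark on your cross-block step: the claim that $M_{p+1,i}\leq M_{p,i}$ ``already forces the ordering $m_{p+1,i}\leq m_{p,i}$'' is not correct as stated---that inequality on the bounds does not imply the inequality on the actual energies once you allow arbitrary shifts $b_{p,i},b_{p+1,i}\geq 0$. The right reason the charge blocks decouple is simply that \eqref{rel9} is imposed \emph{only} when $n_{p+1,i}=n_{p,i}$, so no difference condition links adjacent quasi-particles of distinct charge, while \eqref{rel8} bounds each $m_{p,i}$ individually. The energy ordering displayed in \eqref{kv4} is to be understood as the within-block ordering needed to avoid overcounting indistinguishable particles, not as an extra constraint coupling different charges; with this reading your substitution $m_{p,i}=M_{p,i}-b_{p,i}$ yields independent partitions in each block and the computation goes through exactly as you outline.
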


The character formula for the principal subspace of the generalized Verma module is obtained analogously:
\begin{thm}\label{CPSN}
For the affine Lie algebras $A_{2l-1}^{(2)}$ and $D_{l+1}^{(2)}$ we have
$$\mathop{\text{ch}}    W_{N^{\whn}(\Lambda) }^T=\sum_{\Pc}
\frac{q^{\frac{1}{2} \sum_{i,r=1}^{l}\sum_{m,n=1}^t \langle{\alpha_i}_{(0)}, {\alpha_{r}}_{(0)}\rangle \mathrm{min}\{m,n\} p_i^{(m)}p_r^{(n)}}}
{\prod_{i=1}^{l}\prod_{s=1}^{t}(q^{\mu_i};q^{\mu_i})_{p_i^{(s)}}}\prod_{i=1}^{l}y^{n_i}_{i},$$
where $n_i=\sum_{s=1}^t sp_i^{(s)}$, and the sum goes over all finite sequences $\Pc=(\Pc_1, \ldots, \Pc_l)$ of nonnegative integers  with finite support.
\end{thm}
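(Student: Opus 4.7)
The plan is to apply the quasi-particle basis $\mathcal{B}_{W_{N^{\whn}(\Lambda)}}$ from Theorem \ref{t1} and evaluate the character as a weighted sum over this basis, organized by dual-charge-type. A monomial $b\in B_{W_{N^{\whn}(\Lambda)}}$ is determined by the multiplicities $p_i^{(s)}$ of quasi-particles of color $i$ and charge $s$---which fix the color-type together with the $y$-grading $\prod_i y_i^{n_i}$ for $n_i=\sum_s sp_i^{(s)}$---and by an admissible energy sequence. Hence, once the profile $\Pc=(\Pc_1,\dots,\Pc_l)$ is fixed, only the $q$-weighted sum over the energies needs to be carried out.

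Following the strategy of \cite[Sect.~5]{G} and of the proof of Theorem \ref{CPSL}, I would split this sum into a minimum-energy contribution and a partition generating function encoding the slack in the inequalities \eqref{rel8}--\eqref{rel9}. For the minimum, I would take the maximal energies $m_{p,i}^{\max}$ allowed by these conditions in the generalized Verma setting and sum $-\sum_{p,i}m_{p,i}^{\max}$ via the classical conjugate-partition identity
\[
\sum_{p=1}^{r_i^{(1)}}(2p-1)\,n_{p,i}=\sum_{m,n}\min\{m,n\}\,p_i^{(m)}p_i^{(n)}.
\]
Applying it to the diagonal contribution $\mu_i\sum_p(2p-1)n_{p,i}$ and to the interaction contribution $-\langle{\alpha_i}_{(0)},{\alpha_{i-1}}_{(0)}\rangle\sum_q\min\{n_{q,i-1},n_{p,i}\}$ produces the symmetric bilinear expression
\[
\en_{\min}(\Pc)=\tfrac{1}{2}\sum_{i,r=1}^{l}\sum_{m,n=1}^{t}\langle{\alpha_i}_{(0)},{\alpha_{r}}_{(0)}\rangle\,\min\{m,n\}\,p_i^{(m)}p_r^{(n)},
\]
which is precisely the exponent of $q$ in the theorem; the factor of $1/2$ reflects the fact that each off-diagonal pair $\{i-1,i\}$ arises once from the initial bound on color $i$, whereas the theorem sums over ordered pairs.

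I would parametrize the remaining freedom by the deficits $d_{p,i}:=\mu_i^{-1}(m_{p,i}^{\max}-m_{p,i})\in\mathbb{Z}_{\geq 0}$. Within each block of $p_i^{(s)}$ quasi-particles of color $i$ and common charge $s$, the difference condition \eqref{rel9} translates into the partition inequality $d_{\cdot}\geq d_{\cdot+1}$, contributing the factor $1/(q^{\mu_i};q^{\mu_i})_{p_i^{(s)}}$; between blocks of distinct charges the initial condition \eqref{rel8} is already absorbed into $m_{p,i}^{\max}$ and imposes no further restriction. Multiplying these factors over all $(i,s)$ and summing over $\Pc$---now an arbitrary finitely supported sequence of nonnegative integers, since there is no charge bound in the Verma case---assembles the claimed identity.

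The hardest step is the block-boundary bookkeeping within a single color: one must verify that summing the diagonal and interaction contributions to $m_{p,i}^{\max}$ reassembles exactly into the symmetric $\min\{m,n\}$ form in the $p_i^{(s)}$, and that no hidden constraints appear at the transitions between blocks of different charges. Once this combinatorial identity is secured, the remainder is routine partition counting, directly parallel to the $\Lambda=k\Lambda_0$ case treated in \cite{BuS}.
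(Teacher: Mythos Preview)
Your proposal is correct and follows essentially the same approach as the paper: the paper simply states that the formula ``is obtained analogously'' to Theorem \ref{CPSL}, which in turn is proved ``by arguing as in \cite[Sect.~5]{G} and using the difference conditions \eqref{rel8} and \eqref{rel9}''---precisely the minimum-energy-plus-partition-slack computation you outline. Your conjugate-partition identities $\sum_p(2p-1)n_{p,i}=\sum_s (r_i^{(s)})^2=\sum_{m,n}\min\{m,n\}p_i^{(m)}p_i^{(n)}$ and the analogous cross-color identity are exactly the bookkeeping Georgiev uses, and your observation that the Verma case drops the $\tilde p_j$ shift (since \eqref{kv2} carries no $\delta_{i,j_s}$ term) and the charge bound is the only substantive difference from Theorem \ref{CPSL}.
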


Due to the vector space isomorphism \eqref{zakarakter}, we can    compute the character of $W_{N^{\whn}(\Lambda)}^T$ from the Poincar\'{e}--Birkhoff--Witt basis of $U(\ntplusleq)$ as well, thus getting
\beq\label{karakter1}
\mathop{\text{ch}}     W_{N^{\whn}(\Lambda)}^T = \frac{1}{\prod_{\alpha \in R_+}(\alpha; q^{ \langle \alpha_{(0)},\alpha_{(0)}\rangle/2  })_{\infty} }.
\eeq
On the right-hand side we used the notation
$$
(\alpha; q)_{\infty}=(qy_1^{a_1}y_2^{a_2}\ldots y_l^{a_l};q)_{\infty},
$$
for any   positive root $\alpha\in R_+$ such that $\alpha_{(0)} =a_1{\alpha_1}_{(0)}+\cdots a_l{\alpha_l}_{(0)}$.
Theorem \ref{CPSN} and the character formula \eqref{karakter1} imply the following generalization of the identities from \cite{Bu2}:
\begin{cor}\label{thm_identitet} 
For the affine Lie algebras $A_{2l-1}^{(2)}$ and $D_{l+1}^{(2)}$ we have
$$
   \displaystyle\frac{1}{\prod_{\alpha \in R_+}(\alpha; q^{ \langle \alpha_{(0)},\alpha_{(0)}\rangle/2  })_{\infty} } = \displaystyle\sum_{\Pc}
\frac{q^{\frac{1}{2} \sum_{i,r=1}^{l}\sum_{m,n=1}^t \langle{\alpha_i}_{(0)}, {\alpha_{r}}_{(0)}\rangle \mathrm{min}\{m,n\} p_i^{(m)}p_r^{(n)}}}
{\prod_{i=1}^{l}\prod_{s=1}^{t}(q^{\mu_i};q^{\mu_i})_{p_i^{(s)}}}\prod_{i=1}^{l}y^{n_i}_{i},
$$
where $n_i=\sum_{s=1}^t sp_i^{(s)}$, and the sum goes over all finite sequences $\Pc=(\Pc_1, \ldots, \Pc_l)$ of nonnegative integers  with finite support.
\end{cor}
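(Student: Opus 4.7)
The plan is essentially to read off the identity by equating two expressions for a single character, namely that of the principal subspace $W_{N^{\whn}(\Lambda)}^T$ of the generalized Verma module. Both expressions are already assembled in the excerpt; what remains is to observe that the left-hand side of Corollary \ref{thm_identitet} is the Poincar\'e--Birkhoff--Witt character while the right-hand side is the quasi-particle character, and they must agree.

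More concretely, I would proceed in the following order. First, I would invoke the vector space isomorphism \eqref{zakarakter}, which identifies $W_{N^{\whn}(\Lambda)}^T$ with $U(\overline{n}[\whn]_{<0})$. Because $\overline{n}[\whn]_{<0}$ is abelian (as a subspace of the nilradical tensored with negative powers of $t$), its universal enveloping algebra is a polynomial algebra, and its PBW basis is indexed by multisets of elements $x_{\alpha}\otimes t^{m}$ with $\alpha\in R_{+}$ and $m\in\tfrac{1}{2}\mathbb{Z}_{<0}$ satisfying the grading condition that $t^{m}$ lives in the $(2m)$-eigenspace of $\whn$. Tracking the $(q,y_{1},\dots,y_{l})$-grading (with $y_{i}$ keeping track of the $\alpha_{i}$-weight and $q$ of the $-\delta$-grading) gives the product formula
\[
\mathop{\text{ch}} W_{N^{\whn}(\Lambda)}^T\;=\;\prod_{\alpha\in R_{+}}\frac{1}{(\alpha;q^{\langle\alpha_{(0)},\alpha_{(0)}\rangle/2})_{\infty}},
\]
exactly as in \eqref{karakter1}. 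The step spacings $\langle\alpha_{(0)},\alpha_{(0)}\rangle/2$ reflect whether $\nu\alpha=\alpha$ (integer spacing) or $\nu\alpha\neq\alpha$ (half-integer spacing).

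Second, I would invoke Theorem \ref{CPSN}, which is already proved via the quasi-particle basis of Theorem \ref{t1}; this gives the right-hand side of the corollary. Equating the two expressions for the single generating function $\mathop{\text{ch}} W_{N^{\whn}(\Lambda)}^T$ yields the claimed identity. No further analytic or combinatorial argument is needed.

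There is really no hard step here, since the heavy lifting has been done by Theorem \ref{t1} (construction of the quasi-particle basis) and by Theorem \ref{CPSN} (the combinatorial accounting of energies, dual-charge-types, and colors that produces the quadratic exponent $\tfrac{1}{2}\sum\langle{\alpha_{i}}_{(0)},{\alpha_{r}}_{(0)}\rangle\min\{m,n\}p_{i}^{(m)}p_{r}^{(n)}$ and the $(q^{\mu_{i}};q^{\mu_{i}})_{p_{i}^{(s)}}$-denominators). The only subtlety worth double-checking is the compatibility of gradings in \eqref{karakter1}: one must verify that the $q$-degree in the PBW count agrees with the energy grading used in Theorem \ref{CPSN}, which follows from the definition of the $d$-grading $[d,x\otimes t^{n}]=nx\otimes t^{n}$ together with the fact that a generator $x_{\alpha}\otimes t^{-s}$ carries $y$-weight $\alpha_{(0)}=a_{1}{\alpha_{1}}_{(0)}+\cdots+a_{l}{\alpha_{l}}_{(0)}$ and $q$-weight $s\in\mu\mathbb{Z}_{>0}$ with $\mu=\langle\alpha_{(0)},\alpha_{(0)}\rangle/2$. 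Once this bookkeeping is in place, the corollary is immediate.
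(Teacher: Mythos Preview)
Your approach is exactly the paper's: the corollary is stated immediately after \eqref{karakter1} with the one-line justification that Theorem \ref{CPSN} and \eqref{karakter1} together imply it, and that is precisely what you do.

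One small correction: $\overline{\mathfrak{n}}[\whn]_{<0}$ is \emph{not} abelian in general (for $\alpha,\beta\in R_+$ with $\alpha+\beta\in R_+$ and $m,n<0$ the bracket $[x_\alpha\otimes t^m,x_\beta\otimes t^n]$ is nonzero), so $U(\overline{\mathfrak{n}}[\whn]_{<0})$ is not literally a polynomial algebra. This does not affect the character computation, since the Poincar\'e--Birkhoff--Witt theorem still gives a basis of ordered monomials whose $(q,y_1,\dots,y_l)$-count is identical to that of the symmetric algebra; just replace ``abelian'' and ``polynomial algebra'' by an appeal to PBW and the argument goes through unchanged.
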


\subsection{Standard modules}

Let $\Rc=(r_l^{(1)},\ldots ,r_{1}^{(k-1)})$ be the dual-charge-type of a   quasi-particle monomial $b\in\mathcal{B}_{L^{\whn}(\Lambda)}$.  Consider the  sequences  $\Pc_i =(p_i^{(1)},\ldots ,p_{i}^{(k-1)})$ given by
\beq\label{recall678}
\Pc_i=(r_{i}^{(1)}-r_{i}^{(2)},r_{i}^{(2)}-r_{i}^{(3)},\ldots,r_{i}^{(k-2)}-r_{i}^{(k-1)},r_{i}^{(k-1)}) \quad\text{for}\quad i=1,\ldots ,l.
\eeq
Note that    the number  $p_{i}^{(m)}$ equals  the number of quasi-particles of color $i$ and  charge $m$ in the quasi-particle monomial $b$. We define the character of $L^{\whn}(\Lambda)$ by
$$\mathop{\text{ch}}   L^{\whn}(\Lambda)= \sum_{m,r_1,\ldots, r_l\geq 0} 
\dim {L^{\whn}(\Lambda)}_{(m,r_1,\ldots, r_l)}q^{m}y^{r_1}_{1}\cdots y^{r_l}_{l},$$
where ${L^{\whn}(\Lambda)}_{(m,r_1,\ldots, r_l)}$ is the weight subspace of weight $-m\delta +r_1{\alpha_1} +\cdots + r_l{\alpha_l}$, with respect to the Cartan subalgebra $\h_{(0)}\oplus \mathbb{C}c\oplus \mathbb{C}d$. In a similar way, we can define the character of the vacuum space $L^{\whn}(\Lambda)^{\whh[\nu] ^{+}}$. By the isomorphism in \eqref{LW}  we have
$$
\mathop{\text{ch}}    L^{\whn}(\Lambda)  = \frac{1} {\prod_{i=1}^l(q^{\mu_i }; q^{\mu_i })_{\infty} }\mathrm{ch} \  L^{\whn}(\Lambda)^{\whh[\nu] ^{+}}.
$$
Hence Theorems \ref{t2} and \ref{t3} imply
\begin{thm}
For the affine Lie algebras $A_{2l-1}^{(2)}$ and $D_{l+1}^{(2)}$ we have
\begin{align*}
\mathop{\text{ch}}    L^{\whn}(\Lambda) =&\,\frac{1}{\prod_{i=1}^{l}(q^{\mu_i};q^{\mu_i})_{\infty}}\sum_{\Pc}
\frac{q^{\frac{1}{2} \sum_{i,r=1}^{l}\sum_{m,n=1}^{k-1} \langle{\alpha_i}_{(0)}, {\alpha_{r}}_{(0)}\rangle \mathrm{min}\{m,n\} p_i^{(m)}p_r^{(n)}+\tilde{p}_j}}
{\prod_{i=1}^{l}\prod_{s=1}^{k-1}(q^{\mu_i};q^{\mu_i})_{p_i^{(s)}}} 
\\
&\times \sum_{\alpha \in Q} q^{\bigl \langle \alpha_{(0)}, \frac{k}{2} \alpha_{(0)} + \Lambda +\sum_{s=1}^{k-1}sp_i^{(s)}{\alpha_{i}}_{(0)}\bigr \rangle}\prod_{i=1}^{l}y^{h_{\lambda_i}(k\alpha_{(0)}+\Lambda+ \sum_{s=1}^{k-1}sp_i^{(s)}{\alpha_{i}}_{(0)})}_{i},
\end{align*}
where the first sum   goes over all finite sequences $\Pc=(\Pc_1, \ldots, \Pc_l)$ of $l(k-1)$ nonnegative integers, $\tilde{p}_j=\sum_{s=k_0+1}^{k-1} (s-k_0) \mu_jp_j^{(s)}$ and $h_{\lambda_i}$ are  the fundamental coweights of $\mathfrak{g}_{(0)}$.
\end{thm}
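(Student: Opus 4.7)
The plan is to combine Theorem~\ref{t2} with the Lepowsky--Wilson isomorphism \eqref{LW} to split the character into a Heisenberg contribution, a quasi-particle contribution, and a lattice-translation contribution, and then to evaluate each factor separately.

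First, the Lepowsky--Wilson isomorphism \eqref{LW} gives
\begin{equation*}
\mathop{\text{ch}}\ L^{\whn}(\Lambda) \;=\; \mathop{\text{ch}}\ U(\whh[\nu]^{-})\ \cdot\ \mathop{\text{ch}}\ L^{\whn}(\Lambda)^{\whh[\nu]^{+}}.
\end{equation*}
Using the PBW basis of $U(\whh[\nu]^{-})$, whose generators have $d$-degrees governed by $\mu_i=\tfrac{1}{2}\langle{\alpha_i}_{(0)},{\alpha_i}_{(0)}\rangle$, a direct product formula yields the prefactor $\prod_{i=1}^l (q^{\mu_i};q^{\mu_i})_\infty^{-1}$. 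This handles the first factor in the claimed expression.

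Next, apply Theorem~\ref{t3} to the vacuum space. Its basis consists of the vectors $e_\mu\,\Zc_{\Rc'}(m_{r_l^{(1)},l},\ldots,m_{1,1})\,v_\Lambda$, parameterized by $\mu\in Q_{(0)}$ together with a dual-charge-type $\Rc$ and an energy-type $(m_{p,i})$ satisfying the initial and difference conditions \eqref{rel8}--\eqref{rel9} with all charges bounded by $k-1$ (since on $L^{\whn}(\Lambda)$ the charge-$k$ operator is absorbed into the Heisenberg--translation factor via \eqref{star}). For fixed $\mu$, the $q$- and $y$-degree contributions of a $\Zc$-operator coincide with those of the underlying quasi-particle, so the computation performed for Theorem~\ref{CPSL} carries over verbatim after restricting the charges to $\{1,\ldots,k-1\}$. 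Organizing the sum by the sequences $\Pc=(\Pc_1,\ldots,\Pc_l)$ with $p_i^{(s)}$ the number of color-$i$, charge-$s$ quasi-particles, and using the ``quadratic energy plus boundary correction'' count that produced $\tilde p_j=\sum_{s=k_0+1}^{k-1}(s-k_0)\mu_j p_j^{(s)}$, recovers the middle factor of the stated formula.

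It remains to sum over $\mu\in Q_{(0)}$. Using the commutator relations \eqref{e1}--\eqref{e5} on the standard module of level $k$, one checks that $e_\alpha$ applied to a vector of $\h_{(0)}$-weight $\lambda$ and $d$-eigenvalue $-m$ produces a vector of $\h_{(0)}$-weight $\lambda+k\alpha_{(0)}$ and $d$-eigenvalue $-m-\langle\alpha_{(0)},\lambda\rangle-\tfrac{k}{2}\langle\alpha_{(0)},\alpha_{(0)}\rangle$. Substituting the $\h_{(0)}$-weight $\Lambda|_{\h_{(0)}}+\sum_i n_i{\alpha_i}_{(0)}$ of $\Zc_{\Rc'}(m_\cdot)v_\Lambda$ (with $n_i=\sum_s s p_i^{(s)}$), parameterizing $Q_{(0)}$ by $\alpha\in Q$ via $\mu=\alpha_{(0)}$, and translating $y_i$-exponents via the fundamental coweights $h_{\lambda_i}$ of $\g_{(0)}$, produces precisely the claimed theta-like sum
\begin{equation*}
\sum_{\alpha\in Q} q^{\langle\alpha_{(0)},\frac{k}{2}\alpha_{(0)}+\Lambda+\sum_s s p_i^{(s)}{\alpha_i}_{(0)}\rangle}\,\prod_{i=1}^{l} y_i^{h_{\lambda_i}(k\alpha_{(0)}+\Lambda+\sum_s s p_i^{(s)}{\alpha_i}_{(0)})}.
\end{equation*}
Multiplying the three factors assembles the stated formula.

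The main technical step is the weight bookkeeping in the last paragraph: tracking the combined effect of $e_\alpha$ on both the $d$-eigenvalue and the $\h_{(0)}$-weight of a $\Zc$-dressed highest weight vector via \eqref{e1}--\eqref{e5}, and verifying that the parameterization of $\mu\in Q_{(0)}$ by $\alpha\in Q$ (together with the fundamental coweight identification) assembles the resulting $q$-power and $y$-monomial exactly into the advertised summand.
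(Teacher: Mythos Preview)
Your proposal is correct and follows essentially the same approach as the paper: the paper's proof is the one-line statement that Theorems~\ref{t2} and~\ref{t3} together with the Lepowsky--Wilson factorization $\mathop{\text{ch}} L^{\whn}(\Lambda)=\frac{1}{\prod_i(q^{\mu_i};q^{\mu_i})_\infty}\mathop{\text{ch}} L^{\whn}(\Lambda)^{\whh[\nu]^{+}}$ yield the formula, and you have unpacked exactly this argument, supplying the Heisenberg prefactor, the quasi-particle count borrowed from Theorem~\ref{CPSL} restricted to charges $\le k-1$, and the weight/degree bookkeeping for $e_\alpha$ via \eqref{e1}--\eqref{e5}. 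Your weight-shift computation for $e_\alpha$ (sending $\h_{(0)}$-weight $\lambda$ to $\lambda+k\alpha_{(0)}$ and $d$-eigenvalue $-m$ to $-m-\langle\alpha_{(0)},\lambda\rangle-\tfrac{k}{2}\langle\alpha_{(0)},\alpha_{(0)}\rangle$) is exactly what is needed.
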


As an illustration of the theorem, we now  give some examples of  character formulae. 

\begin{ex}
For the affine Lie algebra
 $A_5^{(2)}$ we have
\begin{align*}
\mathop{\text{ch}}      L^{\whn}(\Lambda_1) =&\, \frac{1}{\prod_{i=1}^{3}(q^{\mu_i};q^{\mu_i})_{\infty}} \sum_{\alpha \in Q} q^{ \langle \alpha_{(0)}, \frac{1}{2}\alpha_{(0)} +\Lambda_1\rangle}\prod_{i=1}^{3}y^{h_{\lambda_i}(\alpha_{(0)}+\Lambda_1)}_{i},\\
\mathop{\text{ch}}      L^{\whn}(2\Lambda_1) =&\,\frac{1}{\prod_{i=1}^{3}(q^{\mu_i};q^{\mu_i})_{\infty}} \sum_{(\Pc_1,\Pc_2,\Pc_3 )}
\frac{q^{\frac{1}{2}  ({p_1^{(1)}}^2-p_1^{(1)}p_2^{(1)}+{p_2^{(1)}}^2-2p_2^{(1)}p_3^{(1)}+2{p_3^{(1)}}^2)+\frac{1}{2}p_1^{(1)}}}
{\prod_{i=1}^{3}(q^{\mu_i};q^{\mu_i})_{p_i^{(1)}}} \\
& \times \sum_{\alpha \in Q}q^{\langle \alpha_{(0)}, \alpha_{(0)}+2\Lambda_1 +\sum_{i=1}^3p_i^{(1)}{\alpha_i}_{(0)}\rangle}\prod_{i=1}^{3}y^{h_{\lambda_i}(2\alpha_{(0)}+2\Lambda_1+\sum_{i=1}^3p_i^{(1)}{\alpha_i}_{(0)})}_{i},\\
\mathop{\text{ch}}      L^{\whn}(\Lambda_0+\Lambda_1) =&\,\frac{1}{\prod_{i=1}^{3}(q^{\mu_i};q^{\mu_i})_{\infty}} \sum_{(\Pc_1,\Pc_2,\Pc_3 )}
\frac{q^{\frac{1}{2}  ({p_1^{(1)}}^2-p_1^{(1)}p_2^{(1)}+{p_2^{(1)}}^2-2p_2^{(1)}p_3^{(1)}+2{p_3^{(1)}}^2)}}
{\prod_{i=1}^{3}(q^{\mu_i};q^{\mu_i})_{p_i^{(1)}}} \\
& \times \sum_{\alpha \in Q}q^{\langle \alpha_{(0)}, \alpha_{(0)}+\Lambda_0+\Lambda_1 +\sum_{i=1}^3p_i^{(1)}{\alpha_i}_{(0)}\rangle}\prod_{i=1}^{3}y^{h_{\lambda_i}(2\alpha_{(0)}+\Lambda_0+\Lambda_1+\sum_{i=1}^3p_i^{(1)}{\alpha_i}_{(0)})}_{i}.
\end{align*}
\end{ex}

\begin{ex}
For the affine Lie algebra
 $D_3^{(2)}$ we have
\begin{align*}
\mathop{\text{ch}}      L^{\whn}(\Lambda_2) =&\, \frac{1}{\prod_{i=1}^{2}(q^{\mu_i};q^{\mu_i})_{\infty}} \sum_{\alpha \in Q} q^{ \langle \alpha_{(0)}, \frac{1}{2}\alpha_{(0)} +\Lambda_2\rangle}\prod_{i=1}^{2}y^{h_{\lambda_i}(\alpha_{(0)}+\Lambda_2)}_{i},\\
\mathop{\text{ch}}   L^{\whn}(2\Lambda_2) =&\,\frac{1}{\prod_{i=1}^{2}(q^{\mu_i};q^{\mu_i})_{\infty}} \sum_{(\Pc_1,\Pc_2)}
\frac{q^{\frac{1}{2}  ({2p_1^{(1)}}^2-2p_1^{(1)}p_2^{(1)}+{p_2^{(1)}}^2)+\frac{1}{2}p_2^{(1)}}}
{\prod_{i=1}^{2}(q^{\mu_i};q^{\mu_i})_{p_i^{(1)}}} \\
  &\times \sum_{\alpha \in Q}q^{\langle \alpha_{(0)}, \alpha_{(0)}+2\Lambda_2 +\sum_{i=1}^2p_i^{(1)}{\alpha_{i}}_{(0)}\rangle}\prod_{i=1}^{2}y^{h_{\lambda_i}(2\alpha_{(0)}+2\Lambda_2+\sum_{i=1}^2p_i^{(1)}{\alpha_{i}}_{(0)})}_{i},\\
	\mathop{\text{ch}}    L^{\whn}(\Lambda_0+\Lambda_2) =&\,\frac{1}{\prod_{i=1}^{2}(q^{\mu_i};q^{\mu_i})_{\infty}} \sum_{(\Pc_1,\Pc_2)}
\frac{q^{\frac{1}{2}  ({2p_1^{(1)}}^2-2p_1^{(1)}p_2^{(1)}+{p_2^{(1)}}^2)}}
{\prod_{i=1}^{2}(q^{\mu_i};q^{\mu_i})_{p_i^{(1)}}} \\
& \times \sum_{\alpha \in Q}q^{\langle \alpha_{(0)}, \alpha_{(0)}+\Lambda_0+\Lambda_2 +\sum_{i=1}^2p_i^{(1)}{\alpha_{i}}_{(0)}\rangle}\prod_{i=1}^{2}y^{h_{\lambda_i}(2\alpha_{(0)}+\Lambda_0+\Lambda_2+\sum_{i=1}^2p_i^{(1)}{\alpha_{i}}_{(0)})}_{i}.
\end{align*}
\end{ex}

\subsection{Parafermionic spaces}

As in \cite{OT}, to define the character of the parafermionic space we use the parafermionic grading operator $D$   on  $L^{\whn} (\Lambda)^{\whh[\nu] ^{+}}_{kQ}$ given by
\beq\label{degop}
D=-d-D^{\whh[\nu] ^{+}}
\quad\text{and}\quad
D^{\whh[\nu] ^{+}}\bigg|\bigg._{L^{\whn} (\Lambda)^{\whh[\nu] ^{+}}_\mu}= \textstyle\frac{\langle \mu_{(0)}, \mu_{(0)}\rangle}{2k}-\frac{\langle \Lambda, \Lambda\rangle}{2k}.
\eeq
Note that by \eqref{degop}    the conformal energy of the basis vector
\beq\label{basisv3}
  \psi_{\Rc'}(\Ec)  v_{  \Lambda }
	=  \psi_{(n_{r_l^{(1)},l},\ldots ,n_{1,1})}(m_{r_l^{(1)},l},\ldots ,m_{1,1})  v_{\Lambda },
\eeq
which corresponds to the quasi-particle monomial 
$$
 \,x^{\whn}_{n_{r_{l}^{(1)},l}\alpha_{l}}(m_{r_{l}^{(1)},l}) \ldots  x^{\whn}_{n_{1,l}\alpha_{l}}(m_{1,l})\ldots 
x^{\whn}_{n_{r_{1}^{(1)},1}\alpha_{1}}(m_{r_{1}^{(1)},1}) \ldots  x^{\whn}_{n_{1,1}\alpha_{1}}(m_{1,1}) 
,
$$
is equal to 
\begin{align*}
&\textstyle-\sum_{i=1}^l\sum_{u=1}^{r_i^{(1)}}m_{u,i}
-\frac{k_j}{k} \sum_{t=1}^{k -1} t p_j^{(t)} \\
&-\textstyle\sum_{i=1}^l\sum_{u=1}^{r_i^{(1)}} 
\left(
\frac{n_{u,i}^2\mu_i}{k}+\frac{1}{k}\textstyle
\left<
n_{u,i}{\alpha_i}_{(0)},
\sum_{s=1}^{u-1} n_{s,i}{\alpha_i}_{(0)}+\sum_{p=1}^{i-1}\sum_{s=1}^{r_p^{(1)}}n_{s,p}{\alpha_p}_{(0)}
\right>
\right), 
\end{align*}
where the second summand is due to the identity
$$
\textstyle
-\frac{1}{k}\textstyle
\left<\textstyle \sum_{t=1}^{k -1} t p_j^{(t)}{\alpha_j}_{(0)},\Lambda \right>
=-\frac{k_j}{k}\mu_j \textstyle\sum_{t=1}^{k -1} t p_j^{(t)}.
$$

Define  the character of the parafermionic space $L^{\whn} (\Lambda)^{\whh[\nu] ^{+}}_{kQ}$ by
$$\mathop{\text{ch}}     L^{\whn} (\Lambda)^{\whh[\nu] ^{+}}_{kQ}=\sum_{m, r_1, \ldots, r_l} \dim  {L^{\whn} (\Lambda)^{\whh[\nu] ^{+}}_{kQ}}_{(m, r_1, \ldots, r_l)}q^m,$$
where ${L^{\whn} (\Lambda)^{\whh[\nu] ^{+}}_{kQ}}_{(m, r_1, \ldots, r_l)}$ is the weight space spanned by monomial vectors (\ref{basisv3}) of conformal energy $-m$ and color-type $( r_1, \ldots, r_l)$. 

Recall the notation from \eqref{recall678} 
and introduce the following expressions:
\begin{gather*}
  D_{\Pc}(q)= \frac{1}{\prod_{i=1}^{l}\prod_{s=1}^{k-1}(q^{\mu_i};q^{\mu_i})_{p_i^{(s)}}},\qquad B_{\Pc}(q)=q^{\mu_j\sum_{t=k_0+1}^{k-1}(t-k_0)p_j^{(t)}}  q^{ -\mu_j\frac{k_j}{k} \sum_{t=1}^{k -1} t p_j^{(t)} },\\
G_{\Pc}(q) = q^{\frac{1}{2} \sum_{i,r=1}^{l}\sum_{m,n=1}^{k-1}\langle{\alpha_i}_{(0)}, {\alpha_{r}}_{(0)}\rangle \left(\mathrm{min}\{m,n\}-\frac{mn}{k}\right) p_i^{(m)} p_r^{(n)}}  .
\end{gather*}
From the above considerations and Theorem \ref{t4} we get 
\begin{thm}\label{xt4}
For the affine Lie algebras $A_{2l-1}^{(2)}$ and $D_{l+1}^{(2)}$ we have
\beq \label{fin} \mathop{\text{ch}}     L^{\whn} (\Lambda)^{\whh[\nu] ^{+}}_{kQ}=
\sum_{\Pc}D_{\Pc}(q)\ts G_{\Pc}(q)\ts B_{\Pc}(q),
\eeq
where the sum goes over all finite sequences $\Pc=(\Pc_l,\ldots ,\Pc_1)$ of $l(k-1)$ nonnegative integers such that  $\Pc_i =(p_i^{(1)},\ldots ,p_{i}^{(k-1)})$. 
\end{thm}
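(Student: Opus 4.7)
The plan is to apply Theorem \ref{t4} to enumerate the basis of the parafermionic space and then to compute $\sum q^{D}$ over the basis, where $D$ is the parafermionic grading operator \eqref{degop}. By Theorem \ref{t4} the basis is indexed by pairs $(\Rc',(m))$, where $\Rc'=(n_{r_l^{(1)},l},\ldots,n_{1,1})$ is an admissible charge-type with entries in $\{1,\ldots,k-1\}$ and $(m)=(m_{r_l^{(1)},l},\ldots,m_{1,1})$ is an energy-type satisfying the initial and difference conditions \eqref{rel8}-\eqref{rel9}. Since charge-types are in bijection with dual-charge-types $\Rc$ via conjugation of partitions, and dual-charge-types are in turn in bijection with the sequences $\Pc=(\Pc_l,\ldots,\Pc_1)$ of $l(k-1)$ nonnegative integers via \eqref{recall678}, I would parameterize the basis by pairs $(\Pc,(m))$.

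Second, I would invoke the explicit formula for the conformal energy of $\psi_{\Rc'}(m)v_\Lambda$ displayed just before the statement of Theorem \ref{xt4}. This formula splits as $D(\psi_{\Rc'}(m)v_\Lambda) = -\sum_{i,u}m_{u,i} + F(\Pc)$, where the first summand depends only on the energy-type and $F(\Pc)$ is a function of the charges alone. For fixed $\Pc$, arguing exactly as in the proof of Theorem \ref{CPSL}, the sum $\sum_{(m)} q^{-\sum m_{u,i}}$ over admissible energy-types equals $q^{E_\Pc^{\min}}\ts D_\Pc(q)$: the factor $D_\Pc(q)$ arises from independent geometric series that account for the slack in \eqref{rel8}-\eqref{rel9}, while the minimum (all inequalities tight) is $E_\Pc^{\min}=\tfrac{1}{2}\sum_{i,r=1}^l\sum_{m,n=1}^{k-1}\langle{\alpha_i}_{(0)},{\alpha_r}_{(0)}\rangle\min\{m,n\}\ts p_i^{(m)}p_r^{(n)} + \sum_{s=k_0+1}^{k-1}(s-k_0)\mu_j p_j^{(s)}$, the analogue of the principal-subspace minimum energy, with the charge restricted to $\{1,\ldots,k-1\}$.

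Third, I would add $E_\Pc^{\min}+F(\Pc)$. Expanding the triple sum defining $F(\Pc)$ and using $n_i=\sum_s s\ts p_i^{(s)}$, the quadratic piece of $F(\Pc)$ symmetrizes to $-\tfrac{1}{2k}\langle\sum_i n_i{\alpha_i}_{(0)},\sum_i n_i{\alpha_i}_{(0)}\rangle = -\tfrac{1}{2k}\sum_{i,r=1}^l\sum_{m,n=1}^{k-1}\langle{\alpha_i}_{(0)},{\alpha_r}_{(0)}\rangle\ts mn\ts p_i^{(m)}p_r^{(n)}$; adding this to the $\min\{m,n\}$-form in $E_\Pc^{\min}$ yields precisely the exponent of $G_\Pc(q)$. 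The remaining linear-in-$\Pc$ terms, namely $\mu_j\sum_{s=k_0+1}^{k-1}(s-k_0)p_j^{(s)}$ coming from $E_\Pc^{\min}$ and $-\tfrac{\mu_j k_j}{k}\sum_t tp_j^{(t)}$ obtained from the identity $-\tfrac{1}{k}\langle\sum_t t p_j^{(t)}{\alpha_j}_{(0)},\Lambda\rangle=-\tfrac{\mu_j k_j}{k}\sum_t tp_j^{(t)}$ recorded in the excerpt, assemble into the exponent of $B_\Pc(q)$. Summing the product $D_\Pc(q)\ts G_\Pc(q)\ts B_\Pc(q)$ over $\Pc$ then gives \eqref{fin}.

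The main obstacle is the algebraic bookkeeping in the third step: reorganizing the triple sum in $F(\Pc)$ as a single symmetric quadratic form in the $n_i$'s, and checking that the combined contributions from $n_{u,i}^2$, same-color pairs $s<u$, and cross-color pairs $p<i$ telescope into the symmetric expression $-\tfrac{1}{2k}\langle\sum_i n_i{\alpha_i}_{(0)},\sum_i n_i{\alpha_i}_{(0)}\rangle$. Once this symmetrization is verified, the cancellation with the principal-subspace minimum energy produces the coefficient $\min\{m,n\}-\tfrac{mn}{k}$ in $G_\Pc(q)$, and the remaining factors follow by direct inspection.
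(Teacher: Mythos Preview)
Your proposal is correct and follows essentially the same approach as the paper: the paper's proof is the single sentence ``From the above considerations and Theorem \ref{t4} we get'', where the ``above considerations'' are precisely the conformal energy formula and the identity for $-\tfrac{1}{k}\langle\sum_t tp_j^{(t)}{\alpha_j}_{(0)},\Lambda\rangle$ that you invoke. Your write-up simply supplies the bookkeeping (the symmetrization of the quadratic piece of $F(\Pc)$ and the assembly of the linear terms into $B_\Pc(q)$) that the paper leaves implicit.
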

It is worth noting that the   character formula  \eqref{fin} coincides with    \cite[Eq. (14)]{Gep2}.
At the end, let us
   give some examples of the parafermionic character formulae.
	
	\begin{ex}
	For the affine Lie algebra  $A_5^{(2)}$  we have
\begin{align*}
\mathop{\text{ch}}   L^{\whn}(2\Lambda_1)^{\whh[\nu] ^{+}}_{2Q} =&\,\sum_{\Pc=(\Pc_1,\Pc_2,\Pc_3)}
\frac{q^{\frac{1}{4}  ({p_1^{(1)}}^2-p_1^{(1)}p_2^{(1)}+{p_2^{(1)}}^2-2p_2^{(1)}p_3^{(1)}+2{p_3^{(1)}}^2)}}
{\prod_{i=1}^{3}(q^{\mu_i};q^{\mu_i})_{p_i^{(1)}}}, \\
\mathop{\text{ch}} L^{\whn}(\Lambda_0+\Lambda_1)^{\whh[\nu] ^{+}}_{2Q}  =&\, \sum_{\Pc=(\Pc_1,\Pc_2,\Pc_3)}
\frac{q^{\frac{1}{4}  ({p_1^{(1)}}^2-p_1^{(1)}p_2^{(1)}+{p_2^{(1)}}^2-2p_2^{(1)}p_3^{(1)}+2{p_3^{(1)}}^2)-\frac{1}{4}p_1^{(1)}}}
{\prod_{i=1}^{3}(q^{\mu_i};q^{\mu_i})_{p_i^{(1)}}} .
\end{align*}
	\end{ex}

\begin{ex}
For the affine Lie algebra
 $D_3^{(2)}$ we have
\begin{align*}
\mathop{\text{ch}}    L^{\whn}(2\Lambda_2)^{\whh[\nu] ^{+}}_{2Q} =&\, \sum_{\Pc=(\Pc_1,\Pc_2)}
\frac{q^{\frac{1}{4}  ({2p_1^{(1)}}^2-2p_1^{(1)}p_2^{(1)}+{p_2^{(1)}}^2)}}
{\prod_{i=1}^{2}(q^{\mu_i};q^{\mu_i})_{p_i^{(1)}}}, \\
\mathop{\text{ch}}    L^{\whn}(\Lambda_0+\Lambda_2)^{\whh[\nu] ^{+}}_{2Q}  =&\, \sum_{\Pc=(\Pc_1,\Pc_2)}
\frac{q^{\frac{1}{4}  ({2p_1^{(1)}}^2-2p_1^{(1)}p_2^{(1)}+{p_2^{(1)}}^2)-\frac{1}{4}p_2^{(1)}}}
{\prod_{i=1}^{2}(q^{\mu_i};q^{\mu_i})_{p_i^{(1)}}} .
\end{align*}
\end{ex}

\section*{Acknowledgement}
The authors are grateful to Mirko Primc for numerous helpful discussions. The authors would also like to thank Ole Warnaar
for bringing to their attention some useful references on combinatorial identities.
This work has been supported in part by Croatian Science Foundation under the project UIP-2019-04-8488.
The first author is partially supported by the QuantiXLie Centre of Excellence, a project cofinanced by the Croatian Government and European Union through the European Regional Development Fund - the Competitiveness and Cohesion Operational Programme (Grant KK.01.1.1.01.0004). 

\linespread{1.0}

\end{document}